\newtheorem{theorem}{Theorem}[section]
\newtheorem{lemma}[theorem]{Lemma}
\newtheorem{proposition}[theorem]{Proposition}
\newtheorem{corollary}[theorem]{Corollary}
\newtheorem{remark}[theorem]{Remark}
\newtheorem*{definition}{Definition}
\begin{document}

\title[Teich\-m\"ul\-ler spaces of piecewise symmetric homeomorphisms]
{Teich\-m\"ul\-ler spaces of piecewise symmetric homeomorphisms on the unit circle} 

\author[H. Wei]{Huaying Wei} 
\address{Department of Mathematics and Statistics, Jiangsu Normal University \endgraf Xuzhou 221116, PR China} 
\email{hywei@jsnu.edu.cn} 

\author[K. Matsuzaki]{Katsuhiko Matsuzaki}
\address{Department of Mathematics, School of Education, Waseda University \endgraf
Shinjuku, Tokyo 169-8050, Japan}
\email{matsuzak@waseda.jp}

\subjclass[2010]{Primary 30F60, 30C62, 32G15; Secondary 37E10, 58D05}
\keywords{Universal Teich\-m\"ul\-ler space, Symmetric homeomorphism, Asymptotically conformal, Bers embedding}
\thanks{Research supported by the National Natural Science Foundation of China (Grant No. 11501259)
and Japan Society for the Promotion of Science (KAKENHI 18H01125).}

\begin{abstract}
We interpolate a new family of Teichm\"uller spaces $T_{\sharp}^X$ between 
the universal Teichm\"uller space $T$ and its little subspace $T_0$, which 
we call the Teichm\"uller space of piecewise symmetric homeomorphisms. This is defined by prescribing a subset $X$ of the unit circle.
The inclusion relation of $X$ induces a natural inclusion of $T_{\sharp}^X$, and
an approximation of $T$ is given by an increasing sequence of $T_{\sharp}^X$.
In this paper, we discuss the fundamental properties of $T_{\sharp}^X$ from the viewpoint of the quasiconformal 
theory of Teichm\"uller spaces. We also consider the quotient space of $T$ by $T_{\sharp}^X$ as an analog 
of the asymptotic Teichm\"uller space. 
%Finally, we show its close relationships with $T$ and $T_0$. 
\end{abstract}

\maketitle

\section{Introduction}
A sense-preserving self-homeomorphism $h$ of the unit circle $\mathbb{S} = \{z\in\mathbb{C} \mid  |z|=1\}$ is said to be {\it quasisymmetric} if there exists a (least) positive constant $C(h)$, called the quasisymmetry constant of $h$, such that 
$$
\frac{\mid h(I_1)\mid}{\mid h(I_2) \mid}\leqslant C(h)
$$
for all pairs of adjacent intervals $I_1$ and $I_2$ on $\mathbb{S}$ with the same length $|I_1|=|I_2|$. 
Beurling and Ahlfors \cite{BA} proved that a sense-preserving self-homeomorphism $h$ of $\mathbb{S}$ is quasisymmetric if and only if there exists some quasiconformal homeomorphism of the unit disk $\mathbb{D} = \{z\in\mathbb{C} \mid  |z|<1\}$ onto itself that has boundary value $h$. Later, Douady and Earle \cite{DE} gave a quasiconformal extension of a quasisymmetric homeomorphism of $\mathbb{S}$, called the barycentric extension, in a conformally invariant way. 

The {\it universal Teich\-m\"ul\-ler space} $T$ is a universal parameter space of marked complex structures on
all Riemann surfaces and can be defined as the group $\rm QS$ of all quasisymmetric homeomorphisms of 
$\mathbb{S}$ modulo the left action of the group 
$\mbox{\rm M\"ob}(\mathbb{S})$ of all M\"obius transformations of $\mathbb{S}$, i.e., 
$T= \mbox{\rm M\"ob}(\mathbb{S}) \backslash \rm QS$. The quotient by $\mbox{\rm M\"ob}(\mathbb{S})$ is alternatively
achieved by giving a normalization to elements in $\rm QS$.
A topology of $T$ is induced by the quasisymmetry constants of normalized quasi\-symmetric homeo\-morphisms. 
It is known that $T$ is contractible (see \cite{EE} and also \cite{DE}) and is an infinite dimensional complex manifold modeled 
on a certain Banach space via the Bers embedding through the Schwarzian derivative (see \cite{GL, Le, Na}). 

A quasisymmetric homeomorphism $h$ is called {\it symmetric} if 
$$
\frac{\mid h(I_1)\mid}{\mid h(I_2) \mid}\to 1
$$
uniformly as $|I_1|=|I_2|\to 0$. 
Let $\rm Sym$ denote the set of all symmetric homeomorphisms of $\mathbb{S}$.
It is known that $h$ is symmetric if and only if $h$ can be extended to an {\it asymptotically conformal} homeomorphism $f$ of $\mathbb{D}$ onto itself in the sense that its complex dilatation 
$\mu = \bar{\partial}f/\partial f$ vanishes at the boundary. 
This result is attributed to Fehlmann \cite{Fe} in \cite{GS}. It is proved by Earle, Markovic, and Saric \cite{EMS} that the barycentric extension of a symmetric homeomorphism $h$ is asymptotically conformal. 
We denote $\mbox{\rm M\"ob}(\mathbb{S}) \backslash \rm Sym$ by $T_0$ and call it the little universal Teichm\"uller space. 

This little subspace $T_0$ of $T$ as well as 
the asymptotic Teichm\"uller space $T_0\backslash T$ was investigated in depth by Gardiner and Sullivan \cite{GS}. In particular, they endowed $T_0$ with a complex Banach manifold structure via the Bers embedding, and proved that the Bers embedding is compatible with the coset decomposition $T_0\backslash T$ and the quotient of the Banach spaces. In particular, $T_0\backslash T$ is equipped with
the complex structure modeled on the quotient Banach space.

We can localize the definition of symmetric homeomorphism.
We say that a quasisymmetric homeomorphism
$h$ is {\it symmetric on the closed interval} $I$ of $\mathbb{S}$ 
if the above uniform convergence $|h(I_1)|/|h(I_2)| \to 0$ holds for all allowable intervals $I_1$ and $I_2$ in $I$.
It was shown by Fehlmann \cite{Fe} (also see \cite[Proposition 3.1]{GS}) that $h$ is symmetric on $I$ if and only if $h$ has a local dilatation,
which is the infimum of maximal dilatations of any possible local quasiconformal extensions of
$h$, equal to $1$ at every point of $I$. 

In this paper, using this localization,
we will interpolate a family of Teichm\"uller spaces
$T_{\sharp}^X$ between $T$ and $T_0$, where $X$ moves on any subsets of $\mathbb{S}$. Especially, we can define $T_{\sharp}^X$
as the set of all normalized piecewise symmetric homeomorphisms for a finite subset $X$ of $\mathbb{S}$. Here, by a {\it piecewise symmetric homeomorphism} $h$ for $X$, we mean that $h$ is symmetric on each closed interval of $\mathbb{S}\setminus X$. 
%Equivalently, $h$ has local dilatation equal to $1$ at every point of $\mathbb{S}\setminus X$. 
In this sense, $T_{\sharp}^X$ is a natural generalization of $T_0$, and 
the increasing scale of sets $T_0 \subset T_{\sharp}^X \subset T$ are obtained. 
Besides the case of a finite subset $X$ of $\mathbb{S}$ above, we can extend the definition of $T_{\sharp}^X$ 
to any subset $X \subset \mathbb{S}$.
This paper serves as a foundation of the theory of $T_{\sharp}^X$ and
many parts deal with properties of $T_{\sharp}^X$ which are analogous to the known properties of $T_0$. 

In Section 2, we review the standard theory of the (little) universal Teichm\"uller space. In Section 3, we define
our Teichm\"uller space $T_{\sharp}^X$ in general by using quasiconformal extension of $\rm QS$. Especially, 
we introduce piecewise symmetric homeomorphisms for a finite subset $X$ in $\mathbb{S}$ and call $T_{\sharp}^X$ 
the {\it piecewise symmetric Teichm\"uller space}. This is done by giving the intrinsic characterization of piecewise symmetric homeomorphisms for $X$ as mapping on $\mathbb{S}$ without using quasiconformal extension. 
%and compare the close relationship between the piecewise symmetric homeomorphism and the symmetric homeomorphism. 

In Sections 4--6, we show that the barycentric extension is a desired extension for piecewise symmetric homeomorphisms for $X$,
following the work of \cite{EMS}. After this, by the standard arguments,
we endow $T_{\sharp}^X$ with a complex Banach manifold structure via the Bers embedding under which it can be biholomorphically embedded as a bounded domain in a certain Banach space. As an application, in Section 7, we prove that the Bers embedding is compatible with the coset decomposition $T_{\sharp}^X\backslash T$ and the quotient of the Banach spaces. Here, we should pay attention to the definition of 
the equivalence relation given by $T_{\sharp}^X$ because it does not have a group structure unlike the usual cases.
As a consequence, we successfully endow $T_{\sharp}^X\backslash T$ with a complex structure modeled on the quotient Banach space
as in the case of the asymptotic Teichm\"uller space.

In Section 8, we show certain rigidity under conjugation by the piecewise symmetric homeomorphism for $X$.
Finally, in Section 9,
we will explore the relationship between $T_{\sharp}^X$ and $T$ when $X$ is dense in $\mathbb{S}$, and
prove that $T_\sharp^X$ is strictly included in $T$ even in this case.

Finally, we specify the difference between the Teichm\"uller spaces $T_{\sharp}^X$ in this paper and $T_{\ast}^X$ in our previous paper
\cite{WM}. For $T_{\ast}^X$, we assume that the Beltrami coefficients decay towards the boundary ${\mathbb S} \setminus X$
in a certain uniform way, but for $T_{\sharp}^X$, the decay condition is less restrictive. Due to this relaxation,
we are able to formulate an intrinsic characterization of piecewise symmetric homeomorphisms (Theorem \ref{closed}), and
a full list of the properties of the barycentric extension (Theorem \ref{EMS}). The latter result also contributes to
the property of the quotient Bers embedding (Theorem \ref{quotientBers}).

 \section{Preliminaries}
In this section, we review basic facts on the universal Teich\-m\"ul\-ler space $T$ and its little subspace $T_0$.
For details, we can refer to monographs \cite{GL, Le, Ma, Na}. 
 
Let 
$$
M(\mathbb D)=\{\mu \in L^\infty(\mathbb D) \mid \Vert \mu \Vert_\infty<1\}
$$
denote the open unit ball of the Banach space $L^{\infty}(\mathbb{D})$ 
of essentially bounded measurable functions on the unit disk $\mathbb{D}$. 
Let $M_0(\mathbb{D})$ consist of all $\mu \in M(\mathbb{D})$ vanishing at the boundary, that is, $\mu$ satisfies
$$
{\rm ess}\!\!\!\!\!\sup_{|z| \geqslant 1-t\quad}\!\!\!\!\!|\mu(z)| \to 0 \quad (t \to 0).
$$

For $\mu \in M(\mathbb{D})$, the solution of the
Beltrami equation (the measurable Riemann mapping theorem (see \cite{Ah66})) gives 
a quasiconformal homeomorphism $f$ of $\mathbb{D}$ onto itself that has complex dilatation $\mu$.
This is uniquely determined up to post-composition of an element in the group $\mbox{\rm M\"ob}(\mathbb{D})$
of M\"obius transformations of $\mathbb D$.
The quasiconformal homeomorphism $f$ extends to $\mathbb{S}$ continuously as a quasi\-symmetric homeo\-morphism of $\mathbb S$.
Conversely, any quasi\-symmetric homeo\-morphism of $\mathbb S$ extends continuously 
to a quasiconformal homeo\-morphism of $\mathbb D$.
Under a normalization condition such as $f$ keeps the points $1, i, -1$ fixed, 
$f$ is determined uniquely by $\mu \in M(\mathbb{D})$. We denote this normalized quasiconformal homeomorphism of $\mathbb D$
as well as its extension to $\mathbb S$ by $f^\mu$.
By giving the normalization, $M(\mathbb{D})$ becomes a group with operation $*$,
where $\mu \ast \nu$ for $\mu, \nu \in M(\mathbb{D})$ is defined as the complex dilatation of $f^\mu \circ f^\nu$.
The inverse $\nu^{-1}$ denotes the complex dilatation of $(f^\nu)^{-1}$.

We say that $\mu$ and $\nu$ in $M(\mathbb{D})$ are equivalent ($\mu \sim \nu$), 
if $f^{\mu}=f^{\nu}$ on the unit circle $\mathbb{S}$. We denote the equivalence class of $\mu$ by $[\mu]$. Then,
the correspondence $[\mu] \mapsto f^{\mu}|_{\mathbb{S}}$ establishes a bijection 
from $M(\mathbb{D})/{\sim}$ onto $T=\mbox{\rm M\"ob}(\mathbb{S}) \backslash \rm QS$. 
Thus, the universal Teich\-m\"ul\-ler space $T$
is identified with $M(\mathbb{D})/{\sim}$. 
The topology of $T=\mbox{\rm M\"ob}(\mathbb{S}) \backslash \rm QS$ 
coincides with the quotient topology of $M(\mathbb{D})$ induced by the {\it Teich\-m\"ul\-ler projection} $\pi:M(\mathbb D) \to T$.
The group structure on $M(\mathbb D)$ projects down to $T$. For any $[\mu], [\nu] \in T$,
$[\mu] \ast [\nu]$ is well-defined by $[\mu \ast \nu]$. Under this operation, $T$ becomes a group.

Let $B(\mathbb{D}^{*})$ denote the Banach space of functions $\varphi$ holomorphic in the exterior of the unit disk $\mathbb{D}^* = \{z\mid |z|>1\}$ with norm 
$$
\Vert \varphi \Vert_B={\rm sup}_{z \in \mathbb D^*} \rho_{\mathbb{D}^*}^{-2}(z)|\varphi(z)|.
$$
Let $B_0(\mathbb{D}^*)$ be the subspace of $B(\mathbb{D}^{*})$ consisting of all functions $\varphi$ vanishing at the boundary $\mathbb{S}$. It means that 
$$
\rho_{\mathbb{D}^*}^{-2}(z)|\varphi(z)|\to 0
$$
as $|z| \to 1^+$. Here, $\rho_{\mathbb{D}^*}(z)=(|z|^2-1)^{-1}$ denotes the hyperbolic density on $\mathbb{D}^*$.

We define a map $\Phi:M(\mathbb D) \to B(\mathbb{D}^{*})$ that sends $\mu$ to the Schwarzian derivative ${\mathcal S}(f_\mu|_{\mathbb D^*})$ of 
$f_\mu|_{\mathbb D^*}$. Here, $f_{\mu}$ is a quasiconformal homeomorphism of the complex plane $\widehat{\mathbb{C}}$ that has complex dilatation $\mu$ in $\mathbb{D}$ and is conformal in $\mathbb{D}^*$. 
The map $\Phi$ is called the {\it Bers Schwarzian derivative map}.
It is known that $\Phi$ is a holomorphic split submersion onto its image, which descends down to a homeomorphism 
$\beta: T \to B(\mathbb{D}^{*})$ onto its image, which is known as the {\it Bers embedding}. 
Via the Bers embedding, $T$ carries a natural complex structure 
so that the Teichm\"uller projection $\pi: M(\mathbb{D})\to T$ is a holomorphic split submersion. 

Similarly, the {\it little universal Teichm\"uller space} $T_0 = \mbox{\rm M\"ob}(\mathbb{S}) \backslash {\rm Sym}$ can be also defined by $\pi(M_0(\mathbb{D}))$. It is known that $\Phi(M_0(\mathbb{D})) = \Phi(M(\mathbb{D}))\cap B_0(\mathbb{D}^*)$. 
The little subspace $T_0$ is a subgroup of $T$. The quotient $T_0 \backslash T$ is defined as the {\it asymptotic Teichm\"uller space} $AT$.
The Bers embedding $\beta:T \to B(\mathbb{D}^*)$ is compatible with the coset decomposition $T_0\setminus T$ and the quotient Banach space $B_0(\mathbb{D}^*)\backslash B(\mathbb{D}^*)$. In fact, the {\it quotient Bers embedding}
$\hat \beta: T_0\setminus T \to B_0(\mathbb{D}^*)\backslash B(\mathbb{D}^*)$ is well-defined to be a homeomorphism onto the image.
By which the complex structure modeled on the quotient Banach space $B_0(\mathbb{D}^*)\backslash B(\mathbb{D}^*)$ is provided for $AT$. 
These facts were proved in \cite{GS} and \cite{EMS}.

The {\it barycentric extension} due to Douady and Earle \cite{DE} gives a quasiconformal extension $E(h):\mathbb{D} \to \mathbb{D}$ of
any quasisymmetric homeomorphism $h \in {\rm QS}$ in a conformally natural way.
In fact, the quasiconformal extension $E(h)$ is a diffeomorphism of $\mathbb{D}$ that is bi-Lipschitz with respect to the hyperbolic metric.
The conformal naturality means that $E(g_1 \circ h \circ g_2)=E(g_1) \circ E(h) \circ E(g_2)$ is satisfied for any $h \in {\rm QS}$ and 
any $g_1, g_2 \in \mbox{\rm M\"ob}(\mathbb{S})$, where the extensions $E(g_1)$ and $E(g_2)$ are in $\mbox{\rm M\"ob}(\mathbb{D})$.
The barycentric extension
induces a continuous (in fact, real analytic) section $s: T \to M(\mathbb{D})$ of the Teich\-m\"ul\-ler projection $\pi:M(\mathbb{D}) \to T$ ($\pi \circ s={\rm id}_T$) by sending a point $[\mu] \in T$ to the complex dilatation $s([\mu]) \in M(\mathbb{D})$ of $E(f^{\mu}|_{\mathbb{S}})$. 
It was proved in \cite{EMS} that $s$ maps $T_0$ into $M_0(\mathbb D)$.

\section{Piecewise symmetric Teichm\"uller space}

In this section, we introduce a new family $T_{\sharp}^X$ of Teichm\"uller spaces which gives an interpolation between $T$ and $T_0$. 

Let $X$ be any subset of $\mathbb{S}$.
We say that $\mu \in L^\infty(\mathbb D)$ {\it vanishes at the boundary relative to $X$} if for every $\varepsilon >0$,
there exists a compact subset $K$ of ${\mathbb D} \cup X$ such that 
$$\Vert \mu|_{{\mathbb D}\setminus K} \Vert_\infty<\varepsilon.$$
Let $L_{\sharp}^{X}(\mathbb{D})$ denote the set of all $\mu \in L^\infty(\mathbb D)$ that vanish at the boundary relative to $X$,
and let $M_{\sharp}^{X}(\mathbb{D})=L_{\sharp}^{X}(\mathbb{D})\cap M(\mathbb{D})$. 
We see that $L_{\sharp}^X(\mathbb{D})$ is a closed subspace of $L^{\infty}(\mathbb{D})$. Indeed, 
assuming that a sequence $\{\mu_k\}_{k \in \mathbb{N}}$ in $L_{\sharp}^X(\mathbb{D})$ and $\mu \in L^{\infty}(\mathbb{D})$ are 
given so that $\Vert\mu_k-\mu\Vert_{\infty}\to 0$ as $k\to\infty$, we show that $\mu\in L_{\sharp}^X(\mathbb{D})$.
For each $\varepsilon>0$, we can choose some $k_0 \in\mathbb{N}$ 
such that $\Vert\mu_{k_0}-\mu\Vert_{\infty}< \varepsilon$. Since $\mu_{k_0}\in L_{\sharp}^X(\mathbb{D})$, 
there exists some compact subset $K$ of ${\mathbb D} \cup X$ such that $\Vert \mu_{k_0}|_{\mathbb{D} \setminus K}\Vert_{\infty}<\varepsilon$. Thus, 
$$
\Vert\mu|_{\mathbb{D} \setminus K}\Vert_{\infty} \leqslant \Vert\mu_{k_0}|_{\mathbb{D} \setminus K}\Vert_{\infty}+\Vert\mu_{k_0}-\mu\Vert_{\infty}<2\varepsilon,
$$
which implies that $\mu\in L_{\sharp}^X(\mathbb{D})$.

\begin{definition}
{\rm
For $X \subset \mathbb{S}$, we denote by ${\rm QS}_{\sharp}^X$ the subset of $\rm QS$ consisting of all
quasisymmetric homeomorphisms
obtained by the boundary extension of quasiconformal homeo\-morphisms of $\mathbb{D}$ onto itself 
(not necessarily normalized) with
dilatations $\mu$ in $M_{\sharp}^{X}(\mathbb{D})$.
The {\it symmetric Teich\-m\"ul\-ler space $T_{\sharp}^X$
relative to $X$} is defined as 
$$
T_{\sharp}^X=\mbox{\rm M\"ob}(\mathbb{S}) \backslash {\rm QS}_{\sharp}^X=\pi(M_{\sharp}^X(\mathbb{D})).
$$
}
\end{definition}

By the composition or the inverse of quasisymmetric homeomorphisms, 
the subset $X$ may be mapped to another subset $Y$ of $\mathbb S$. 
On account of this, ${\rm QS}_{\sharp}^X$ is not a subgroup of $\rm QS$, and similarly,
$T_{\sharp}^X$ is not a subgroup of $T$ unless either $X$ or ${\mathbb S} \setminus X$ consists of less than or equal to three points.

In the remainder of this section, we focus on the case where $X \subset \mathbb S$ consists of
finitely many points. In this case, we call an element of ${\rm QS}_{\sharp}^X$
a {\it piecewise symmetric homeomorphism} for $X$ and $T_{\sharp}^X$
the {\it piecewise symmetric Teich\-m\"ul\-ler space} for $X$.

For a finite subset $X \subset \mathbb S$, 
we easily see the decomposition of the Banach space of the Beltrami differentials as follows.
 
\begin{proposition}\label{L}
For $X=\{\xi_1,\dots \xi_n\} \subset \mathbb {S}$,
$L_{\sharp}^{X}(\mathbb{D})=L_{\sharp}^{\xi_1}(\mathbb{D})+\dots +L_{\sharp}^{\xi_n}(\mathbb{D})$.
\end{proposition}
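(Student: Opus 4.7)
The plan is to prove the two inclusions separately. For $\supseteq$, note first that $L_{\sharp}^{\xi_i}(\mathbb{D}) \subset L_{\sharp}^X(\mathbb{D})$ for each $i$, since any compact subset of $\mathbb{D} \cup \{\xi_i\}$ is \emph{a fortiori} a compact subset of $\mathbb{D} \cup X$. Moreover, $L_{\sharp}^X(\mathbb{D})$ is closed under addition---the verification is the same triangle-inequality argument as the closedness proof given in the paper, replacing the single compact set $K$ by a union $K_1 \cup K_2$---so any sum of elements taken from the spaces $L_{\sharp}^{\xi_i}(\mathbb{D})$ lies in $L_{\sharp}^X(\mathbb{D})$.

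For the nontrivial inclusion $\subseteq$, the strategy is a partition of unity adapted to the finite set $X$. Concretely, I would choose pairwise disjoint open neighborhoods $V_i \subset \overline{\mathbb{D}}$ of the points $\xi_i$ whose closures still satisfy $\overline{V_i} \cap X = \{\xi_i\}$, together with smooth cutoff functions $\varphi_i \colon \overline{\mathbb{D}} \to [0,1]$ with $\varphi_i \equiv 1$ on a smaller neighborhood of $\xi_i$ and $\varphi_i \equiv 0$ off $V_i$. Given $\mu \in L_{\sharp}^X(\mathbb{D})$, the proposed decomposition is
\[
\mu \;=\; \sum_{i=1}^n \varphi_i\mu \;+\; \Bigl(1-\sum_{i=1}^n \varphi_i\Bigr)\mu.
\]
I would then argue that $\varphi_i\mu \in L_{\sharp}^{\xi_i}(\mathbb{D})$ for each $i$, while the remainder term lies in the smaller space $L_{\sharp}^{\emptyset}(\mathbb{D})$ of Beltrami coefficients vanishing at the entire boundary, which is contained in $L_{\sharp}^{\xi_1}(\mathbb{D})$; absorbing the remainder into the $i=1$ summand then yields the required decomposition as a sum in $\sum_i L_{\sharp}^{\xi_i}(\mathbb{D})$.

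To verify $\varphi_i\mu \in L_{\sharp}^{\xi_i}(\mathbb{D})$, given $\varepsilon>0$ I would pick a compact $K_\varepsilon \subset \mathbb{D} \cup X$ with $\Vert \mu|_{\mathbb{D}\setminus K_\varepsilon}\Vert_\infty<\varepsilon$ using $\mu \in L_{\sharp}^X(\mathbb{D})$, and take $K := K_\varepsilon \cap \overline{V_i}$. This intersection is compact, and the choice $\overline{V_i}\cap X=\{\xi_i\}$ forces $K \subset \mathbb{D} \cup \{\xi_i\}$. A routine pointwise check shows $|\varphi_i\mu|<\varepsilon$ on $\mathbb{D}\setminus K$: outside $V_i$ the cutoff $\varphi_i$ vanishes, whereas inside $V_i\setminus K$ one has $z \notin K_\varepsilon$ so $|\mu(z)|<\varepsilon$ and $|\varphi_i(z)|\leqslant 1$. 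The remainder term is handled in the same way, using the support of $1-\sum_i \varphi_i$ in place of $\overline{V_i}$; this support meets $\mathbb{S}$ in a closed set disjoint from $X$, so the corresponding $K$ is forced to lie in $\mathbb{D}$ itself.

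The main obstacle is essentially bookkeeping rather than analysis: one must arrange the neighborhoods $V_i$ so that their closures meet $X$ only at $\xi_i$, which is possible thanks to the finiteness of $X$ and the Hausdorff property of $\mathbb{S}$. Beyond this careful setup, the partition-of-unity construction mechanically delivers the decomposition. Note however that the argument relies crucially on $X$ being finite; for infinite (especially dense) $X$, as considered later in Section~9, a direct analogue of this decomposition is far less clear.
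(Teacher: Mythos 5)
Your proof is correct, and it is essentially the same approach as the paper's: both decompose $\mu$ by a partition of unity localizing near each $\xi_i$, with the verification that each piece lies in $L_{\sharp}^{\xi_i}(\mathbb{D})$ running along the same lines as your compactness argument. The paper achieves this more economically by cutting $\mathbb{D}$ into $n$ sectors $E_1,\dots,E_n$ (radial segments from the origin to the midpoints of the arcs of $\mathbb{S}\setminus X$), each with exactly one $\xi_i$ on its boundary, and writing $\mu=\sum_i \mu\,1_{E_i}$; this makes the decomposition exact, so the smooth cutoffs and the remainder term you introduce --- while harmless and correctly handled --- are unnecessary overhead.
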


\begin{proof}
The inclusion $\supset$ is easy to see. 
For the inverse inclusion $\subset$, we take any element $\mu$ in $L_{\sharp}^{X}(\mathbb{D})$. 
The unit circle $\mathbb{S}$ is divided into $n$ sub-arcs by the points $\xi_1, \dots, \xi_n$. 
Take the midpoint of each sub-arc and connect the midpoint of each sub-arc to the origin $0$ by a segment. 
The union of these segments divide $\mathbb{D}$ into $n$ sectors $E_1, \dots, E_n$, and
each $E_i$ $(i=1,\dots, n)$ contains only one $\xi_i$ on its boundary. 
Then, the decomposition of $\mu$ is given simply by restricting $\mu$ to each sector;
$\mu=\mu 1_{E_1}+ \dots +\mu 1_{E_n}$, where $\mu 1_{E_i} \in L_{\sharp}^{\xi_i}(\mathbb{D})$ for each $i=1,\dots,n$.
\end{proof}

We consider the intrinsic characterization of piecewise symmetric homeomorphisms $h \in {\rm QS}_{\sharp}^X$ 
as mapping on $\mathbb{S}$. Before stating our result, we recall some terminology.

The {\it local dilatation} $D_h(\eta)$ of a quasisymmetric homeomorphism $h$ {\it at a point} $\eta \in \mathbb{S}$
is the infimum of the dilatations of the possible quasiconformal extensions $\tilde{h}$ 
of $h$ to open sets $V$ of $\overline{\mathbb D}$ with $\eta \in V$.  
This concept can be extended obviously to the case where $\eta$ is replaced by a closed interval $I$ of $\mathbb{S}$. The {\it local dilatation} $D_h(I)$ of a quasisymmetric homeomorphism $h$ {\it on a closed interval} $I \subset \mathbb{S}$ 
is the infimum of the dilatations of the possible quasiconformal extensions $\tilde{h}$ of $h$ to open sets $V$ of $\overline{\mathbb D}$ with $I \subset V$.

The following result is known in the theory of quasiconformal mapping (see \cite[Staz 3.1]{Fe} and \cite[Proposition 3.1]{GS} for more details).  

\begin{proposition}\label{sym}
For a quasisymmetric homeo\-morphism $h$ from a closed interval $I$ of $\mathbb{S}$ to a closed interval $J$ of $\mathbb{S}$, the following conditions on $h$ are equivalent:
\begin{enumerate}
    \item $h$ is symmetric on $I$;
    \item $h$ has the local dilatation equal to 1 at every point of $I$;
    \item $h$ has the local dilatation equal to 1 on $I$;
    \item there exists an extension $\tilde{h}$ of $h$ to an open subset $V$ of $\overline{\mathbb D}$
    with $I \subset V$ that is asymptotically conformal on $V \cap \mathbb D$.
\end{enumerate}
\end{proposition}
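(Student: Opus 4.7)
The plan is to close the cycle $(1)\Rightarrow(4)\Rightarrow(3)\Rightarrow(2)\Rightarrow(1)$. The equivalence $(1)\Leftrightarrow(2)$ is Fehlmann's pointwise theorem \cite{Fe}, \cite[Proposition 3.1]{GS}, and the implications $(4)\Rightarrow(3)\Rightarrow(2)$ are immediate from the definitions; the only substantive step is $(1)\Rightarrow(4)$.

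For the easy directions: if $\tilde h$ realises (4) on an open $V\supset I$, then for any $\varepsilon>0$ the asymptotic conformality of $\tilde h$ on $V\cap\mathbb D$ provides a narrower sub-neighborhood $V'$ of $I$ in $\overline{\mathbb D}$ on which the complex dilatation of $\tilde h$ is bounded by $\varepsilon/(2+\varepsilon)$, yielding a quasiconformal extension of maximal dilatation $\le 1+\varepsilon$; hence $D_h(I)=1$, which is (3). Since any open set containing $I$ is a neighborhood of each $\eta\in I$, (3) trivially implies (2). The pointwise equivalence $(1)\Leftrightarrow(2)$ at each $\eta$ is standard: Beurling--Ahlfors estimates \cite{BA} relate the quasisymmetry ratios of $h$ on small intervals straddling $\eta$ to the essential supremum of the complex dilatation of a Beurling--Ahlfors-type extension in a small half-disk at $\eta$, giving the equivalence of the uniform convergence $|h(I_1)|/|h(I_2)|\to 1$ and $D_h(\eta)=1$.

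For the main step $(1)\Rightarrow(4)$, I would first extend $h\colon I\to J$ to a global quasisymmetric homeomorphism $H\colon\mathbb S\to\mathbb S$ and form its Beurling--Ahlfors extension $\tilde H\colon\mathbb D\to\mathbb D$. In a conformal chart sending a half-disk neighborhood of the interior $I^\circ$ to a rectangle in the closed upper half-plane with $I^\circ$ on the real axis, the complex dilatation of $\tilde H$ at $z=x+iy$ is controlled by an increasing universal function of the quasisymmetry ratios of $H$ on adjacent intervals of length comparable to $y$ centered at $x$ \cite{BA}. Hypothesis (1) forces these ratios to tend uniformly to $1$ as $y\to 0^+$ for $x$ ranging over compact subsets of $I^\circ$, so $\tilde H$ is asymptotically conformal on an open $V_0\subset\overline{\mathbb D}$ with $I^\circ\subset V_0$.

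The main obstacle is to enlarge $V_0$ so that it contains all of $I$, including its endpoints. Near $\eta\in\partial I$, the Beurling--Ahlfors formula at $x+iy$ samples $H$ on intervals extending beyond $I$, where no symmetry is assumed. I would resolve this either by choosing the global extension $H$ so that the symmetric behavior of $h$ at each endpoint persists past it (for instance, using an affine model in a local chart matched to the one-sided behavior of $h$ at $\eta$), or by a Fehlmann-style gluing \cite{Fe}: cover $I$ by finitely many closed subintervals, each strictly interior to a larger arc on which a symmetric extension is available, build local Beurling--Ahlfors extensions on half-disk neighborhoods of each, and splice the Beltrami coefficients via a smooth partition of unity. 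The spliced coefficient still vanishes uniformly as $|z|\to 1$ on a common open $V\supset I$, and the corresponding solution of the Beltrami equation with boundary values matching $h$ on $I$ supplies the required asymptotically conformal extension.
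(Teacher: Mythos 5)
The paper does not prove Proposition~\ref{sym} at all; it is quoted as a known result with references to Fehlmann \cite[Satz 3.1]{Fe} and Gardiner--Sullivan \cite[Proposition 3.1]{GS}. So there is no ``paper's proof'' for your argument to match against, and the relevant question is whether your blind attempt is actually a proof.

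Your logical scaffolding $(1)\Rightarrow(4)\Rightarrow(3)\Rightarrow(2)\Rightarrow(1)$ is sound, and the easy steps $(4)\Rightarrow(3)\Rightarrow(2)$ are handled correctly (the arithmetic $|\mu|\le\varepsilon/(2+\varepsilon)\Rightarrow K\le 1+\varepsilon$ is right, and $D_h(\eta)\le D_h(I)$ for $\eta\in I$ is immediate). The substantive gap is in $(1)\Rightarrow(4)$, and it is precisely where you flag it: the endpoints of $I$. Neither of your two suggested repairs goes through as written. The partition-of-unity splicing of Beltrami coefficients $\mu=\sum\phi_i\mu_i$ does not produce an extension of $h$: solving the Beltrami equation with a spliced $\mu$ gives some quasiconformal map whose boundary trace is determined by the normalization, not prescribable, and there is no reason it should coincide with $h$ on $I$. (Splicing Beltrami coefficients is only harmless when the underlying \emph{maps} agree on the overlaps, which is not the case here.) Your first repair --- extend $H$ past the endpoint so that symmetry persists --- is the right idea but is left as a vague phrase (``using an affine model in a local chart''); it needs an actual construction, such as the odd reflection $H(\eta-t)=2h(\eta)-h(\eta+t)$ in a half-plane chart at each endpoint $\eta$, together with a verification that two-sided symmetry of $H$ at $\eta$ then follows from the one-sided symmetry of $h$ on $I$ (which does hold, but requires a short cross-ratio computation). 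Once $H$ is globally quasisymmetric and symmetric on an open arc $I'\supsetneq I$, the decay of the Beltrami coefficient of its barycentric or Beurling--Ahlfors extension near all of $I$ follows from the localization theorem of Hu--Muzician \cite{HM}, which is the same ingredient the paper invokes in the proof of Theorem~\ref{closed}. As it stands, your $(1)\Rightarrow(4)$ identifies the obstacle but does not surmount it, so the proof is incomplete; closing it essentially amounts to reproducing the content of the cited Fehlmann argument.
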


\begin{remark}\label{ac}
To clarify condition $(4)$, by saying that the extension $\tilde{h}$ of $h$ on $I$ is asymptotically conformal on $V \cap \mathbb D$, 
we mean that, for every $\varepsilon > 0$, there is an open subset $U$ of $\overline{\mathbb D}$ with $I \subset U \subset V$ 
such that $|\mu(z)| < \varepsilon$ for almost all $z$ in $U \cap \mathbb D$. 
\end{remark}

Now we can characterize a piecewise symmetric homeomorphism for a finite set $X \subset \mathbb S$ without using quasiconformal extension.

\begin{theorem}\label{closed}
For a quasisymmetric self-homeomorphism $h$ of $\mathbb{S}$, $h \in {\rm QS}_{\sharp}^{X}$ if and only if $h|_{I}$ is symmetric for each closed interval $I$ contained in $\mathbb{S}\setminus X$. 
\end{theorem}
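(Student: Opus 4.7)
The plan here is to unpack the definition of $M_\sharp^X(\mathbb D)$ and invoke Proposition~\ref{sym}. Assume $h$ is the boundary value of a quasiconformal $\tilde h:\mathbb D\to\mathbb D$ with $\mu:=\mu_{\tilde h}\in M_\sharp^X(\mathbb D)$, and let $I\subset\mathbb S\setminus X$ be a closed subinterval. Since $I$ is compact and disjoint from the finite set $X$, I first fix an open $V\subset\overline{\mathbb D}$ with $I\subset V$ and $\overline V\cap X=\emptyset$. For any $\varepsilon>0$, the hypothesis produces a compact $K\subset\mathbb D\cup X$ with $\|\mu|_{\mathbb D\setminus K}\|_\infty<\varepsilon$; as $K$ is closed in $\mathbb C$ and meets $\mathbb S$ only within $X$, the open set $U:=V\setminus K$ still contains $I$ and satisfies $|\mu|<\varepsilon$ a.e.\ on $U\cap\mathbb D$. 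This verifies the asymptotic conformality condition of Remark~\ref{ac} for $\tilde h$ on $V\cap\mathbb D$, so Proposition~\ref{sym}~((4)$\Rightarrow$(1)) yields symmetry of $h|_I$.

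\textbf{Reverse direction.} Conversely, assume $h|_I$ is symmetric for every closed $I\subset\mathbb S\setminus X$. I must exhibit a \emph{single} quasiconformal $\tilde h:\mathbb D\to\mathbb D$ extending $h$ with $\mu_{\tilde h}\in M_\sharp^X(\mathbb D)$. The plan is to use an extension with a built-in local nature---the Beurling--Ahlfors extension, transferred to the disk via a M\"obius map $\phi:\mathbb D\to\mathbb{H}$ sending some $p\in\mathbb S\setminus X$ to $\infty$. The Beltrami coefficient of $BA(h\circ\phi^{-1})$ at $x+iy$ depends only on $h\circ\phi^{-1}|_{[x-y,x+y]}$, with modulus controlled quantitatively by the quasi-symmetry defect of $h$ on that shadow. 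Given $\varepsilon>0$, I pick $\delta>0$ and remove $\delta$-caps around each $\xi_i$ to obtain a closed subset of $\mathbb S\setminus X$ on which $h$ is symmetric componentwise; by Proposition~\ref{sym}~((1)$\Leftrightarrow$(3)) and compactness, the local dilatation converges uniformly to $1$ on this subset. The local property of $BA$ then gives $r_0=r_0(\delta,\varepsilon)<1$ such that $|\mu_{\tilde h}|<\varepsilon$ on the portion of the collar $\{r_0<|z|<1\}$ lying outside cusp-shaped closed sets $K_i\subset\mathbb D\cup\{\xi_i\}$ (for instance $K_i=\{z\in\mathbb D:|z-\xi_i|\le c\sqrt{1-|z|}\}\cup\{\xi_i\}$). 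The union of $\{|z|\le r_0\}$ with the $K_i$ is then compact in $\mathbb D\cup X$, so $\mu_{\tilde h}\in M_\sharp^X(\mathbb D)$.

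\textbf{Main obstacle.} The principal technical point is the quantitative local estimate on the Beurling--Ahlfors extension, which is classical in the theory (\cite{BA,Fe}); additional care is needed in handling the distinguished point $p$ at infinity, but this is resolved by the fact that $h$ is symmetric on a closed interval containing $p$. An alternative construction stays on $\mathbb D$: use Proposition~\ref{sym}~((1)$\Rightarrow$(4)) to produce local asymptotically conformal extensions on neighborhoods of an exhausting sequence of closed intervals in $\mathbb S\setminus X$, and patch them with an initial global quasiconformal extension via a partition of unity adapted to the sector decomposition of Proposition~\ref{L}. Either route yields the required extension.
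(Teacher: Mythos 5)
Your forward direction is essentially the paper's argument, only spelled out slightly more explicitly: both unpack the definition of $M_\sharp^X(\mathbb D)$, restrict to an open $V\subset\overline{\mathbb D}$ with $I\subset V$ and $\overline V\cap X=\emptyset$, and apply Proposition~\ref{sym}~$(4)\Rightarrow(1)$.

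For the reverse direction you take a genuinely different route from the paper. The paper invokes the Hu--Muzician theorem \cite[Theorem 2]{HM} for the Douady--Earle barycentric extension: for each $\eta\in\mathbb S\setminus X$ that theorem produces an open $V_\eta\subset\overline{\mathbb D}$ meeting $\mathbb S$ inside a prescribed arc avoiding $X$ on which $E(h)$ is asymptotically conformal, and then $K=(\mathbb D\cup X)\setminus\bigcup_\eta(U_\eta\cap\mathbb D)$ is the desired compact set. Your proposal instead builds an extension by hand, either via Beurling--Ahlfors or by patching local extensions, and while both outlines are plausible they leave real work undone. For the Beurling--Ahlfors route: the construction is not conformally natural, so transferring to the half-plane privileges a boundary point $p=\phi^{-1}(\infty)$, and you must verify that the Beltrami coefficient actually decays in the cusp at infinity. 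That is not the same as saying ``$h$ is symmetric on a closed interval containing $p$''; the quasisymmetry ratio of $h\circ\phi^{-1}$ at $x+iy$ sees the arc $\phi^{-1}([x-y,x+y])$, whose spherical size depends on both $y$ and $|x|$, so you need to show the relevant scale shrinks to $0$ on the part of the collar you retain. For the patching alternative: quasiconformal maps do not superpose under a partition of unity --- a pointwise affine combination of two local extensions is in general neither a homeomorphism nor an extension of $h$ --- so one must instead glue along interior curves and control the dilatation in the transition region, which your sketch does not address. Neither obstacle is fatal, but both are exactly what \cite{HM} has already packaged: the Douady--Earle extension is a single conformally natural map (no distinguished point), and its local asymptotic conformality near an arc where $h$ is symmetric is the content of that theorem. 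The paper's choice also yields for free, in the finite-$X$ case, that the barycentric section itself lands in $M_\sharp^X(\mathbb D)$ (the remark after Corollary~\ref{DE}), whereas your ad~hoc extension would eventually have to be replaced by it.
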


\begin{proof}
Suppose that $h \in {\rm QS}_{\sharp}^{X}$. Then, there is an extension $\tilde{h}$ of $h$ to $\mathbb{D}$ with complex dilatation $\mu \in M_{\sharp}^{X}(\mathbb{D})$, which implies that for every $\varepsilon > 0$, there exists a compact subset $K$ of $\mathbb{D}\cup X$ such that $\Vert \mu|_{\mathbb{D}\backslash K}\Vert_{\infty} < \varepsilon$. We conclude by $(4) \Rightarrow (1)$ in Proposition \ref{sym} that $h$ is symmetric on each closed interval $I \subset \mathbb{S}\backslash X$.

Conversely, suppose that $h|_I$ is symmetric for each closed interval $I$ contained in $\mathbb{S}\backslash X$. For any $\eta \in \mathbb{S}\backslash X$, let $I_{\eta}$ be an open interval on $\mathbb{S}\backslash X$ with $X\cap \overline{I_{\eta}}=\varnothing$, containing $\eta$. %and symmetric with respect to $\eta$. 
Hu and Muzician \cite[Theorem 2]{HM} showed that there exists an open subset $V_\eta$ of $\overline{\mathbb D}$
%(in fact, $V_\eta \cap \mathbb D$ is a hyperbolic half-plane)
with $V_{\eta}\cap \mathbb{S} \subset I_{\eta}$ such that the barycentric extension $E(h):\mathbb D \to \mathbb D$ of $h \in \rm QS$ is asymptotically conformal on $V_\eta \cap \mathbb D$. 
For any $\varepsilon>0$, we take an open subset $U_\eta$ of $\overline{\mathbb D}$ with $\eta \in U_\eta \subset V_\eta$
such that the complex dilatation $\mu$ of $E(h)$ satisfies that $|\mu(z)| < \varepsilon$ for almost all $z$ in $U_\eta \cap \mathbb D$ (see Remark \ref{ac} above). We
set $U=\bigcup_{\eta\in\mathbb{S}\backslash X}(U_{\eta} \cap \mathbb D)$ and $K =(\mathbb{D} \cup X) \setminus U$.
Then, $K$ is a compact subset of $\mathbb{D}\cup X$ and $\Vert \mu|_{{\mathbb D} \setminus K}\Vert_\infty \leqslant \varepsilon$. This
shows that $\mu \in M_{\sharp}^{X}(\mathbb{D})$, and consequently, $h \in {\rm QS}_{\sharp}^{X}$. 
\end{proof}

\begin{remark}
{\rm
Following Proposition \ref{sym} and Theorem \ref{closed}, we find out that for a quasisymmetric self-homeomorphism $h$ of $\mathbb{S}$, $h$ is symmetric if and only if $h$ has a local dilatation equal to 1 at every point of $\mathbb{S}$, while $h$ is piecewise symmetric for $X$ if and only if $h$ has a local dilatation equal to 1 at every point of $\mathbb{S}\backslash X$. In this sense, the piecewise symmetric homeomorphism  for $X$ is a natural generalization of the symmetric homeomorphism on $\mathbb{S}$. 
}
\end{remark}

\section{Bers Schwarzian derivative map}

In this section, we focus on the Bers Schwarzian derivative map $\Phi:M(\mathbb{D}) \to B(\mathbb{D}^*)$
restricted to the subspace $M_{\sharp}^X(\mathbb{D})$. 

We first introduce the corresponding subspace of $B(\mathbb{D}^*)$.
We say that $\varphi \in B(\mathbb{D}^*)$ {\it vanishes at the boundary relative to $X \subset \mathbb S$} if for every $\varepsilon >0$,
there exists a compact subset $K^*$ of ${\mathbb D^*} \cup X$ such that 
$\Vert \rho_{\mathbb D^*}^{-2}(z)\varphi(z)|_{{\mathbb D}^*\setminus K^*} \Vert_\infty<\varepsilon$.
Let $B_{\sharp}^{X}(\mathbb{D}^*)$ denote the set of all $\varphi \in B(\mathbb D^*)$ that vanish at the boundary relative to $X$.
We see that $B_{\sharp}^{X}(\mathbb{D}^*)$ is a closed subspace of $B(\mathbb D^*)$ by a similar proof to the case of  
$L_{\sharp}^X(\mathbb{D}) \subset L^{\infty}(\mathbb{D})$. 

By the following theorem, we see that $B_{\sharp}^X(\mathbb{D}^*)$ is the appropriate space 
corresponding to $M_{\sharp}^X(\mathbb{D})$
under the Bers Schwarzian derivative map $\Phi$.

\begin{theorem}\label{Bers}
%For any $X \subset \mathbb{S}$,
The Bers Schwarzian derivative map $\Phi$ maps $M_{\sharp}^X(\mathbb{D})$ into $B_{\sharp}^X(\mathbb{D}^*)$. 
\end{theorem}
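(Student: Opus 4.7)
The plan is to reduce to the case where the Beltrami coefficient is essentially compactly supported in $\mathbb{D}\cup X$, then exploit that $B_{\sharp}^X(\mathbb{D}^*)$ is closed and that $\Phi$ is holomorphic, hence locally Lipschitz, on every ball $\{\Vert\nu\Vert_\infty\leqslant k<1\}$. Given $\mu\in M_{\sharp}^X(\mathbb{D})$ and $\delta>0$, I would choose a compact $K_\delta\subset\mathbb{D}\cup X$ with $\Vert\mu|_{\mathbb{D}\setminus K_\delta}\Vert_\infty<\delta$ and set $\mu_\delta:=\mu\cdot\chi_{K_\delta\cap\mathbb{D}}$. Then $\mu_\delta\in M(\mathbb{D})$ lies in the same closed ball as $\mu$ and $\Vert\mu-\mu_\delta\Vert_\infty<\delta$, so $\Vert\Phi(\mu)-\Phi(\mu_\delta)\Vert_B\leqslant L(\Vert\mu\Vert_\infty)\,\delta$. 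Since $B_{\sharp}^X(\mathbb{D}^*)$ is closed, it therefore suffices to verify $\Phi(\mu_\delta)\in B_{\sharp}^X(\mathbb{D}^*)$ for every $\delta$.

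For this reduced case, $f_{\mu_\delta}$ is conformal on $\widehat{\mathbb{C}}\setminus(K_\delta\cap\mathbb{D})$, and because $K_\delta$ is compact in $\mathbb{D}\cup X$ its closure in $\widehat{\mathbb{C}}$ meets $\mathbb{S}$ only inside $X$; hence $f_{\mu_\delta}$ is conformal on an open set $\Omega\subset\widehat{\mathbb{C}}$ containing $\mathbb{D}^*\cup(\mathbb{S}\setminus X)$. Therefore $g:=\Phi(\mu_\delta)=\mathcal{S}(f_{\mu_\delta}|_{\mathbb{D}^*})$ extends holomorphically across every point of $\mathbb{S}\setminus X$ and is bounded on a $\widehat{\mathbb{C}}$-neighborhood of each such point. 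Consequently $F(z):=\rho_{\mathbb{D}^*}^{-2}(z)|g(z)|=(|z|^2-1)^2|g(z)|$ on $\mathbb{D}^*$, extended by $0$ on $\mathbb{S}\setminus X$, is continuous on the open set $\mathbb{D}^*\cup(\mathbb{S}\setminus X)\subset\widehat{\mathbb{C}}$.

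For any prescribed $\varepsilon>0$ I would then look at $S_\varepsilon:=\{z\in\mathbb{D}^*\cup(\mathbb{S}\setminus X):F(z)\geqslant\varepsilon\}$. This is closed in $\mathbb{D}^*\cup(\mathbb{S}\setminus X)$ and, because $F$ vanishes on $\mathbb{S}\setminus X$, is contained in $\mathbb{D}^*$. A limit point of $S_\varepsilon$ in $\widehat{\mathbb{C}}$ must lie in $\overline{\mathbb{D}^*}=\mathbb{D}^*\cup\mathbb{S}$, and it cannot lie in $\mathbb{S}\setminus X$ by the continuity of $F$ and its vanishing there; hence $K^*:=\overline{S_\varepsilon}^{\,\widehat{\mathbb{C}}}$ is a compact subset of $\mathbb{D}^*\cup X$. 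On $\mathbb{D}^*\setminus K^*$ one has $F<\varepsilon$, which is exactly the defining condition for $\Phi(\mu_\delta)\in B_{\sharp}^X(\mathbb{D}^*)$, finishing the argument.

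The point I expect to have to check most carefully is the exclusion of points of $\mathbb{S}\setminus X$ from the closure $K^*$; this step encapsulates the local vanishing of $\Phi(\mu_\delta)$ away from $X$ and relies essentially on the property $\overline{K_\delta\cap\mathbb{D}}\cap\mathbb{S}\subset X$ that characterises compactness in $\mathbb{D}\cup X$.
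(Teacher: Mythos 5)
Your argument is correct and takes a genuinely different route from the paper's. The paper estimates $\Phi(\mu)$ directly via the Astala--Zinsmeister integral representation, splits the integral over a compact $K\subset\mathbb D\cup X$ and its complement, bounds the $K$-piece by ${\rm Area}(\gamma_\zeta(K))$, and controls that area through the hyperbolic distance from $\zeta^*$ to the reflected set $K^*$, yielding the explicit bound $\rho_{\mathbb D^*}^{-2}|\Phi(\mu)|<\sqrt{\pi\varepsilon^2+A\varepsilon}$ outside a hyperbolic $(-\log\varepsilon)$-neighbourhood of $K^*$. You instead split $\mu$ itself: the truncation $\mu_\delta=\mu\chi_{K_\delta\cap\mathbb D}$ still lies in $M_\sharp^X(\mathbb D)$, so by continuity of $\Phi$ and closedness of $B_\sharp^X(\mathbb D^*)$ it suffices to treat $\mu_\delta$ (mere continuity of $\Phi$ does the job here, so the Lipschitz refinement is harmless but not needed); for $\mu_\delta$, Weyl's lemma gives that $f_{\mu_\delta}$ is conformal on the open set $\widehat{\mathbb C}\setminus K_\delta\supset\mathbb D^*\cup(\mathbb S\setminus X)$ (note that $\widehat{\mathbb C}\setminus(K_\delta\cap\mathbb D)$ need not be open, so $\widehat{\mathbb C}\setminus K_\delta$ is the set to use, which you implicitly do by passing to $\Omega$), whence $\Phi(\mu_\delta)$ extends holomorphically across $\mathbb S\setminus X$, the weight $(|z|^2-1)^2$ kills $F$ continuously there, and your $S_\varepsilon$-closure argument finishes cleanly. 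One shared convention to keep in mind: both your $K^*$ and the paper's $E^*$ may contain $\infty$, so compactness of a subset of ``$\mathbb D^*\cup X$'' is tacitly taken in $\widehat{\mathbb C}$ with $\infty$ counted as an interior point of $\mathbb D^*$. Your approach is more elementary in that it bypasses the integral representation, relying instead on Weyl's lemma and (implicitly) the Nehari bound for the boundedness and continuity of $\Phi$; the paper's harder estimate buys an explicit quantitative decay rate.
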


\begin{proof}
By the integral representation of the Schwarzian derivative, which was established by Astala and Zinsmeister \cite{AZ}
(see also Cui \cite{Cu}), we have  
$$
\rho_{\mathbb{D}^*}^{-4}(\zeta^*)|\Phi(\mu)(\zeta^*)|^2 \leqslant C\int_{\mathbb{D}}\frac{(|\zeta^*|^2-1)^2}
{|z-\zeta^*|^4}|\mu(z)|^2dxdy 
$$
for every $\zeta^* \in \mathbb{D}^*$, 
where $C>0$ is a constant depending only on 
$\Vert\mu\Vert_{\infty}.$ 

Let $\gamma_{\zeta}(z)=(\overline{\zeta^*}z-1)/(z-\zeta^*) \in \mbox{\rm M\"ob}(\mathbb{D})$ 
be a M\"obius transformation of $\mathbb{D}$ onto itself that sends $\zeta$ to $0$. 
Here, $\zeta \in \mathbb{D}$ and $\zeta^* \in \mathbb{D}^*$ are the reflection to each other with respect to $\mathbb{S}$. 
We see that $|\gamma_{\zeta}'(z)|^2=(|\zeta^*|^2-1)^2/|z-\zeta^*|^4$. It follows that 
\begin{equation*}
 \begin{split}
 &\quad \int_{\mathbb{D}}\frac{(|\zeta^*|^2-1)^2}
{|z-\zeta^*|^4}|\mu(z)|^2dxdy  =   \int_{\mathbb{D}}|\gamma_{\zeta}'(z)|^2|\mu(z)|^2dxdy\\
&=\int_{\mathbb{D} \setminus K}|\gamma_{\zeta}'(z)|^2|\mu(z)|^2dxdy + \int_{K}|\gamma_{\zeta}'(z)|^2|\mu(z)|^2dxdy.\\
 \end{split}   
\end{equation*}
Here, for a given $\varepsilon > 0$, we choose a compact subset $K$ of $\mathbb{D}\cup X$
so that $\Vert \mu|_{\mathbb{D} \setminus K} \Vert_{\infty} < \varepsilon$
under the condition $\mu \in M_\sharp^X(\mathbb{D})$.
Then, the last formula is estimated from above by 
\begin{equation*}
 \begin{split}
 &\quad\ \varepsilon^2 \int_{\mathbb{D} \setminus K}|\gamma_{\zeta}'(z)|^2dxdy + \int_{K}|\gamma_{\zeta}'(z)|^2dxdy \\
 &\leqslant\pi\varepsilon^2 + {\rm Area}(\gamma_{\zeta}(K)),\\
 \end{split}   
\end{equation*}
where $\rm Area$ stands for the Euclidean area. 

We consider ${\rm Area}(\gamma_{\zeta}(K))$ for $\zeta \in {\mathbb D} \setminus K$.
The notation $\asymp$ is used below when the both sides are 
comparable, i.e., one side is bounded from above and below by multiples of the other side
with some positive absolute constants. The notation $\lesssim$ is used when the left side is bounded from above by a multiple of the right side with some positive absolute constant. 

Noting that ${\rm Area}(\gamma_{\zeta}(K))\lesssim 1 - d(0, \gamma_{\zeta}(K))$ for the Euclidean distance $d$, we see that 
\begin{equation*}
 1 - d(0, \gamma_{\zeta}(K))
 \asymp e^{-d_H(0, \gamma_{\zeta}(K))} = e^{-d_H(\zeta, K)} = e^{-d_H(\zeta^*, K^*)}
\end{equation*}
by the hyperbolic distance formula $d_H(0,z)=\log\frac{1+|z|}{1-|z|}$ $(z \in \mathbb{D})$ and its conformal invariance.
Therefore, a condition $d_H(\zeta^*, K^*) > -\log \varepsilon$ implies that ${\rm Area}(\gamma_{\zeta}(K)) < A\varepsilon$ for some absolute constant $A>0$. We set
$$
E^*=\{\zeta^* \in {\mathbb D}^* \mid d_H(\zeta^*,K^*) \leqslant -\log \varepsilon \},
$$
which is a compact subset of $\mathbb{D}^*\cup X$. If $\zeta^* \in {\mathbb D}^* \setminus E^*$, then $d_H(\zeta^*, K^*) > -\log \varepsilon$.

Combining this area estimate with the above integral inequality, we conclude  that if $\zeta^* \in \mathbb{D}^*\setminus E^*$, then 
$$
\rho_{\mathbb{D}^*}^{-2}(\zeta^*)|\Phi(\mu)(\zeta^*)| < \sqrt{\pi\varepsilon^2+A\varepsilon}.
$$
Since $\varepsilon > 0$ is arbitrarily chosen, this implies that 
%if $\mu \in M_\sharp^X(\mathbb{D})$, then
$\Phi(\mu) \in B_\sharp^X(\mathbb{D}^*)$.
\end{proof}

We note that $\Phi:M_{\sharp}^X(\mathbb{D}) \to B_{\sharp}^X(\mathbb{D}^*)$ is holomorphic because
$\Phi:M(\mathbb{D}) \to B(\mathbb{D}^*)$ is holomorphic and the closed subspaces 
$M_{\sharp}^X(\mathbb{D})$ and $B_{\sharp}^X(\mathbb{D}^*)$ are endowed with
the relative topologies from $M(\mathbb{D})$ and $B(\mathbb{D}^*)$.
See Theorem \ref{submersion} below.
 
\section{Barycentric extension}

In this section, we will prove that the barycentric extension 
gives an appropriate right inverse of $\pi:M_{\sharp}^{X}(\mathbb{D}) \to T_{\sharp}^X$.
In other words,
for the section $s:T \to M(\mathbb{D})$ of the universal Teich\-m\"ul\-ler space induced by the barycentric extension,
we show that the image $s(T_{\sharp}^X)$ is in $M_{\sharp}^{X}(\mathbb{D})$.

%\begin{lemma}
%Let $g:\mathbb{D} \to \mathbb{D}$ be a quasiconformal self-homeomorphism of $\mathbb{D}$.
%If $\mu_g \in M_{\sharp}^X(\mathbb{D})$, then $\mu_{g^{-1}} \in M_{\sharp}^Y(\mathbb{D})$ for %$Y=g(X)$,
%where $g$ is assumed to be extended to $\mathbb S$.
%\end{lemma}

%\begin{theorem}\label{DE}
%For every $h\in \rm QS_{\sharp}^{X}$, the complex dilatation of the barycentric extension $E(h)$ %of $h$ is in $M_{\sharp}^{X}(\mathbb{D})$.
%\end{theorem}

This claim  follows from the following more general result concerning the section $s$.
This was originally proved by Earle, Markovic, and Saric \cite[Theorem 4]{EMS} for the little universal Teich\-m\"ul\-ler space 
$T_0=\mbox{\rm M\"ob}(\mathbb{S}) \backslash {\rm Sym}$ and for
the subspaces
$M_0(\mathbb{D}) \subset M(\mathbb{D})$ and $B_0(\mathbb{D}^*) \subset B(\mathbb{D}^*)$
consisting of vanishing elements on the boundary.
The proof below is a modification of theirs.
%For the sake of simplicity, the normalization for a quasiconformal homeomorphism $f^\mu$ of $\mathbb{D}$ onto itself with
%$\mu \in M(\mathbb{D})$ only in this proof is given by imposing $f^\mu(0)=0$ and $f^\mu(1)=1$ on $f^\mu$.

\begin{theorem}\label{EMS}
Let $\mu$ and $\nu$ be in $M(\mathbb{D})$, and let $X \subset \mathbb{S}$.
Then, the following are equivalent:
\begin{enumerate}
\item
$\Phi(\mu)-\Phi(\nu)\in B_{\sharp}^X(\mathbb{D}^*)$;
\item
$s([\mu]) - s([\nu])\in L_{\sharp}^X(\mathbb{D})$;
\item
$s([\mu]) \ast s([\nu])^{-1} \in M_{\sharp}^Y(\mathbb{D})$ for $Y=f^\nu(X) \subset \mathbb{S}$;
\item
$[\mu] \ast [\nu]^{-1} \in T_{\sharp}^Y$ for $Y=f^\nu(X) \subset \mathbb{S}$.
\end{enumerate}
\end{theorem}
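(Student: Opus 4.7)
The plan is to follow the strategy of \cite[Theorem 4]{EMS}, establishing the equivalences around condition $(2)$: prove the cycle $(2) \Rightarrow (3) \Rightarrow (4) \Rightarrow (2)$, and separately $(1) \Leftrightarrow (2)$. The shifted subset $Y = f^\nu(X)$ arises naturally from tracking right multiplication by $[\nu]^{-1}$.

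For $(4) \Rightarrow (2)$, the technically deepest step, I would adapt the argument of \cite[Theorem 4]{EMS} to the localized setting. The key EMS-type claim is: if quasisymmetric homeomorphisms $h_1, h_2$ of $\mathbb{S}$ satisfy $h_1 \circ h_2^{-1} \in \rm QS_\sharp^Y$ with $h_2(X) = Y$, then the complex dilatations of the barycentric extensions $E(h_1), E(h_2)$ differ by an element of $L_\sharp^X(\mathbb{D})$. The proof uses the explicit integral formula for the Douady--Earle extension, bounding the dilatation at an interior point of $\mathbb{D}$ by the local quasisymmetric data of the boundary map near the nearest point of $\mathbb{S}$; the asymptotic analysis of \cite{EMS} then localizes to compact neighborhoods of $\mathbb{S} \setminus X$. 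Applied to $h_1 = f^\mu|_\mathbb{S}, h_2 = f^\nu|_\mathbb{S}$, so that $h_1 \circ h_2^{-1} = f^{\mu \ast \nu^{-1}}|_\mathbb{S} \in \rm QS_\sharp^Y$ by hypothesis $(4)$, this gives $s([\mu]) - s([\nu]) \in L_\sharp^X(\mathbb{D})$.

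For $(2) \Rightarrow (3)$, the composition formula for complex dilatations gives
\[
\bigl|\mu_{E_\mu \circ E_\nu^{-1}}(E_\nu(z))\bigr| = \left|\frac{s([\mu])(z) - s([\nu])(z)}{1 - \overline{s([\nu])(z)}\,s([\mu])(z)}\right|,
\]
with $E_\mu, E_\nu$ the barycentric extensions of $f^\mu|_\mathbb{S}, f^\nu|_\mathbb{S}$ (whose dilatations are $s([\mu]), s([\nu])$). The right-hand side is comparable to $|s([\mu])(z) - s([\nu])(z)|$. Since $E_\nu$ extends to a homeomorphism $\overline{\mathbb D} \to \overline{\mathbb D}$ carrying $X$ to $Y$, it sends compact subsets of $\mathbb{D} \cup X$ bijectively onto compact subsets of $\mathbb{D} \cup Y$, so the decay of $s([\mu]) - s([\nu])$ near $X$ transfers into the decay of $s([\mu]) \ast s([\nu])^{-1}$ near $Y$. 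The implication $(3) \Rightarrow (4)$ is immediate from $T_\sharp^Y = \pi(M_\sharp^Y(\mathbb{D}))$.

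For $(1) \Leftrightarrow (2)$, the integral representation of the Schwarzian used in Theorem \ref{Bers} generalizes to differences:
\[
\rho_{\mathbb{D}^*}^{-4}(\zeta^*)\,|\Phi(\mu)(\zeta^*) - \Phi(\nu)(\zeta^*)|^2 \leqslant C \int_\mathbb{D} |\gamma_\zeta'(z)|^2 |\mu(z) - \nu(z)|^2 dx\, dy,
\]
with $C = C(\|\mu\|_\infty, \|\nu\|_\infty)$. Applied with $\mu := s([\mu]), \nu := s([\nu])$ (which share Schwarzians with the originals) and combined with the area estimate of Theorem \ref{Bers}, this gives $(2) \Rightarrow (1)$. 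The converse $(1) \Rightarrow (2)$ dualizes via a pointwise bound
\[
|s([\mu])(z) - s([\nu])(z)| \leqslant C \sup_{\zeta^* \in B_H(z^*, R)} \rho_{\mathbb{D}^*}^{-2}(\zeta^*)\,\bigl|\Phi(\mu)(\zeta^*) - \Phi(\nu)(\zeta^*)\bigr| + o(1)
\]
through the reflection $z \mapsto z^*$ across $\mathbb{S}$, converting decay of the Schwarzian difference near $X$ on $\mathbb{D}^*$ into decay of the dilatation difference near $X$ on $\mathbb{D}$. The main obstacle lies in the two implications $(4) \Rightarrow (2)$ and $(1) \Rightarrow (2)$, each requiring a careful quantitative \emph{localization} of the EMS machinery: for every $\varepsilon > 0$ one must exhibit an explicit compact subset of $\mathbb{D} \cup X$ outside of which the dilatation difference is below $\varepsilon$, relying on the reflection across $\mathbb{S}$ and the hyperbolic bi-Lipschitz character of the Douady--Earle extension.
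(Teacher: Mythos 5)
Your chain $(2)\Rightarrow(3)\Rightarrow(4)$ is correct and agrees with the paper (in particular, the observation that $E_\nu$ carries compact subsets of $\mathbb{D}\cup X$ onto compact subsets of $\mathbb{D}\cup Y$ is exactly what makes $(2)\Rightarrow(3)$ work). The paper, however, closes the cycle $(1)\Rightarrow(2)\Rightarrow(3)\Rightarrow(4)\Rightarrow(1)$, so it never needs a direct argument for $(2)\Rightarrow(1)$ or $(4)\Rightarrow(2)$; both hard implications $(1)\Rightarrow(2)$ and $(4)\Rightarrow(1)$ are handled by the same EMS blow-up scheme: take $z_k\to\eta\in\mathbb{S}\setminus X$, pull back by M\"obius maps $g_k$ with $g_k(0)=z_k$, pass to a uniformly convergent subsequence of $f^{\mu_k}, f^{\nu_k}$, and use \cite[Lemma 6.1]{EMS} together with conformal naturality of the barycentric extension (for $(1)\Rightarrow(2)$) and Bers equivariance plus the renormalization $\widetilde\Phi(\mu)(z)=z^4\Phi(\mu)(z)$ (for $(4)\Rightarrow(1)$).

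Your $(1)\Leftrightarrow(2)$ sketch contains a genuine gap. The inequality
\[
\rho_{\mathbb{D}^*}^{-4}(\zeta^*)\,|\Phi(\mu)(\zeta^*)-\Phi(\nu)(\zeta^*)|^2
\leqslant C\int_\mathbb{D}|\gamma_\zeta'(z)|^2|\mu(z)-\nu(z)|^2\,dx\,dy
\]
is not a corollary of the Astala--Zinsmeister/Cui estimate. That estimate applies to $\Phi(\kappa)$ for a single Beltrami coefficient $\kappa$; the difference $\Phi(\mu)-\Phi(\nu)$ is not $\Phi$ applied to a single coefficient. The natural route is the Schwarzian chain rule $\Phi(\mu)-\Phi(\nu)=\mathcal{S}(f_\mu\circ f_\nu^{-1})\circ f_\nu\cdot (f_\nu')^2$, which after applying AZ/Cui on the quasidisk $f_\nu(\mathbb{D}^*)$ produces a kernel distorted by $f_\nu$ and an integrand $|\mu\ast\nu^{-1}|^2$; getting back to $|\gamma_\zeta'|^2|\mu-\nu|^2$ requires a non-trivial change of variables whose Jacobian destroys the clean form you wrote. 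The proposed dual pointwise bound for $(1)\Rightarrow(2)$ is likewise unsupported: there is no such direct estimate of the barycentric dilatation by a local sup of the Schwarzian difference. This is precisely what the EMS compactness argument is designed to circumvent --- it never quantifies a pointwise relation between $s([\mu])-s([\nu])$ and $\Phi(\mu)-\Phi(\nu)$, only extracts limits along sequences tending to $\mathbb{S}\setminus X$. You should replace the $(1)\Leftrightarrow(2)$ portion with the blow-up argument, or reorganize into the cycle the paper uses so that only one of the two hard directions needs a direct proof.

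Finally, your $(4)\Rightarrow(2)$ sketch says the EMS proof ``uses the explicit integral formula for the Douady--Earle extension.'' That is not the mechanism in \cite[Theorem 4]{EMS} nor in this paper; what is actually used is conformal naturality $s([g_k^*\mu])=g_k^*(s([\mu]))$ together with normal-family compactness, which is quite different from an explicit kernel estimate. The idea is salvageable but the stated justification does not reflect the argument that would have to be carried out.
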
 

\begin{proof}
$(1) \Rightarrow (2)$: For any point $\eta \in {\mathbb S} \setminus X$, 
we take a sequence $\{z_k\}_{k \in \mathbb {N}} \subset \mathbb{D}$ that converges to $\eta$. 
For each $k$, we choose a M\"obius transformation $g_k \in \mbox{\rm M\"ob}(\mathbb{D})$ with $g_k(0)=z_k$,
and define $\mu_k=g_k^* s([\mu])$ and $\nu_k=g_k^* s([\nu])$. Then, $\Phi(\mu_k)=g_k^* \Phi(\mu)$ and 
$\Phi(\nu_k)=g_k^* \Phi(\nu)$ for $g_k \in \mbox{\rm M\"ob}(\mathbb{D}^*)$. 
We also see that $\{g_k(z^*)\}$ converges to $\eta$ for every $z^* \in \mathbb {D}^*$. 
Since we assume that $\Phi(\mu)-\Phi(\nu) \in B_{\sharp}^X(\mathbb{D}^*)$, we see that
$$
\rho_{\mathbb{D}^*}^{-2}(z^*)|\Phi(\mu_k)(z^*)-\Phi(\nu_k)(z^*)|
=\rho_{\mathbb{D}^*}^{-2}(g_k(z^*))|(\Phi(\mu)-\Phi(\nu))(g_k(z^*))| 
$$
tends to $0$ as $k \to \infty$ for each $z^* \in \mathbb{D}^*$.
In particular, $\Phi(\mu_k)-\Phi(\nu_k) \to 0$ as $k \to \infty$.

Since $\Vert \mu_k \Vert_\infty=\Vert s([\mu]) \Vert_\infty$ and $\Vert \nu_k \Vert_\infty=\Vert s([\nu]) \Vert_\infty$,
by passing to a subsequence, we may assume that
$f^{\mu_{k}}$ converges uniformly to some 
quasiconformal homeomorphism $f^{\mu_0}$ with a complex dilatation $\mu_0 \in M({\mathbb D})$ and $f^{\nu_{k}}$ 
converges uniformly to some 
$f^{\nu_0}$ with $\nu_0 \in M({\mathbb D})$. In this situation,
\cite[Lemma 6.1]{EMS} asserts that 
$\Phi(\mu_{k})$ converges locally uniformly to
$\Phi(\mu_0)$ and $\Phi(\nu_{k})$ converges locally uniformly to
$\Phi(\nu_0)$ on $\mathbb{D}^*$.  
Since $\Phi(\mu_k)-\Phi(\nu_k) \to 0$ as $k \to \infty$, this implies that $\Phi(\mu_0)=\Phi(\nu_0)$.

By \cite[Lemma 6.1]{EMS} again, we see that $s([\mu_{k}])$ converges locally uniformly to
$s([\mu_0])$ and $s([\nu_{k}])$ converges locally uniformly to
$s([\nu_0])$ on $\mathbb{D}$. Here, $\Phi(\mu_0)=\Phi(\nu_0)$ implies that $s([\mu_0])=s([\nu_0])$.
Therefore, $s([\mu_{k}])-s([\nu_{k}])$ converges to $0$, and in particular,
$s([\mu_{k}])(0)-s([\nu_{k}])(0) \to 0$ as $k \to \infty$.

The conformal naturality of the barycentric extension implies that
$$
s([\mu_{k}])=s([g_k^*\mu])=g_k^*(s([\mu])); \quad s([\nu_{k}])=s([g_k^*\nu])=g_k^*(s([\nu])).
$$
It follows that
$$
|s([\mu])(z_k)-s([\nu])(z_k)|=|s([\mu_k])(0)-s([\nu_k])(0)| \to 0 \quad (k \to \infty).
$$
Since $s([\mu])-s([\nu])$ is continuous and $\{z_k\}$ is an arbitrary sequence converging to a point on ${\mathbb S} \setminus X$,
this implies that $s([\mu])-s([\nu]) \in L_{\sharp}^X(\mathbb{D})$.

$(2) \Rightarrow (3)$: Let $\lambda=s([\mu]) \ast s([\nu])^{-1}$, that is,
$\lambda$ is the complex dilatation of the composition $f^{s([\mu])} \circ (f^{s([\nu])})^{-1}$. This satisfies
$$
|\lambda \circ f^{s([\nu])}|=\frac{|s([\mu]) -s([\nu])|}{|1-\overline{s([\nu])} s([\mu])|},
$$
from which the assertion follows.

$(3) \Rightarrow (4)$:
From $\pi(s([\mu]) \ast s([\nu])^{-1})=[\mu] \ast [\nu]^{-1}$, the assertion follows immediately.

$(4) \Rightarrow (1)$:  
There are $\mu' \in [\mu]$ and $\nu' \in [\nu]$ such that $\lambda=\mu' \ast \nu'^{-1} \in M_{\sharp}^Y(\mathbb D)$.
As before, for a point $\eta \in {\mathbb S} \setminus X$ and a sequence $\{z_k\}_{k \in \mathbb {N}} \subset \mathbb{D}$ converging to $\eta$,
we choose $g_k \in \mbox{\rm M\"ob}(\mathbb{D})$ with $g_k(0)=z_k$,
and define $\mu_k=g_k^* \mu'$ and $\nu_k=g_k^* \nu'$. By
$$
|\lambda \circ f^{\nu'}|=\frac{|\mu' -\nu'|}{|1-\overline{\nu'} \mu'|} \in M_{\sharp}^X(\mathbb D),
$$
we see that
$$
\Vert (\mu_k-\nu_k)|_{\Delta(0,r)} \Vert_\infty=\Vert (\mu'- \nu')|_{\Delta(z_k,r)} \Vert_\infty
$$
tends to $0$ as $k \to \infty$ for any $r>0$. Here, $\Delta(z,r) \subset \mathbb{D}$ denotes a hyperbolic disk
with center $z$ and radius $r$.

Since $\Vert \mu_k \Vert_\infty=\Vert \mu' \Vert_\infty$ and $\Vert \nu_k \Vert_\infty=\Vert \nu' \Vert_\infty$,
by passing to a subsequence, we may assume that  
$f^{\mu_{k}}$ converges uniformly to some 
quasiconformal homeomorphism $f^{\mu_0}$ with a complex dilatation $\mu_0 \in M({\mathbb D})$ and $f^{\nu_{k}}$ 
converges uniformly to some 
$f^{\nu_0}$ with $\nu_0 \in M({\mathbb D})$. Let $\lambda_k=\mu_k \ast \nu_k^{-1}$.
%, that is,
%$\lambda_k$ is the complex dilatation of $f^{\mu_k} \circ (f^{\nu_k})^{-1}$. This satisfies
%$$
%|\lambda_k \circ f^{\nu_k}|=\frac{|\mu_k -\nu_k|}{|1-\overline{\nu_k} \mu_k|}.
%$$

For an arbitrary compact subset $E \subset \mathbb{D}$, we take $r>0$ such that $(f^{\nu_0})^{-1}(E) \subset \Delta(0,r)$.
Since $(f^{\nu_{k}})^{-1}$ converges to $(f^{\nu_0})^{-1}$ uniformly on $\mathbb{D}$ as $k \to \infty$, we can assume that
$(f^{\nu_{k}})^{-1}(E) \subset \Delta(0,r)$ for all sufficiently large $k$. Hence, $\Vert \lambda_{k}|_E \Vert_\infty \to 0$
as $k \to \infty$.
%$$
%\Vert \lambda_{k}|_E \Vert_\infty \leq 
%\frac{\Vert (\mu_{k}-\nu_{k})|_{\Delta(0,r)}\Vert_\infty}{1-\Vert \mu \Vert_\infty \Vert \nu \Vert_\infty}
%\to 0 \quad (k \to \infty).
%$$
Since $E$ is arbitrary, we see from this estimate that the limit $f^{\mu_0} \circ (f^{\nu_0})^{-1}$ of
$f^{\mu_{k}} \circ (f^{\nu_{k}})^{-1}$ is conformal on $\mathbb{D}$. In fact, $f^{\mu_0} \circ (f^{\nu_0})^{-1}$ is the identity
by the normalization. Therefore, $f^{\mu_0}= f^{\nu_0}$, and both $f^{\mu_{k}}$ and $f^{\nu_{k}}$
converge uniformly to the same limit $f^{\mu_0}$ as $k \to \infty$.

For every $\mu \in M(\mathbb D)$, we define $\widetilde \Phi(\mu)(z)=z^4\Phi(\mu)(z)$ ($z \in \mathbb D^*$).
As $\rho_{\mathbb D^*}^{-2}(z)|\Phi(\mu)(z)|$ is bounded, we see that $\widetilde \Phi(\mu)$ is a holomorphic function on $\mathbb D^*$.
Similarly to \cite[Lemma 6.1]{EMS}, it can be proved that 
$\widetilde \Phi(\mu_{k})$ and $\widetilde \Phi(\nu_{k})$ converge to the same limit $\widetilde \Phi(\mu_0)$
locally uniformly on $\mathbb{D}^*$ as $k \to \infty$. Therefore, $\widetilde \Phi(\mu_{k})-\widetilde \Phi(\nu_{k})$ 
converges to $0$, and in particular,
$\widetilde \Phi(\mu_{k})(\infty)-\widetilde \Phi(\nu_{k})(\infty) \to 0$ as $k \to \infty$.

The equivariance of the Bers projection implies that 
$$
\Phi(\mu_k)=\Phi(g_k^*\mu')=g_k^* \Phi(\mu);\quad \Phi(\nu_k)=\Phi(g_k^*\nu')=g_k^* \Phi(\nu).
$$
By $\lim_{z \to \infty}g_k(z)=z_k^*$ and $\lim_{z \to \infty}|z^2g'_k(z)|=\rho_{\mathbb{D}^*}^{-1}(z_k^*)$,
it follows that
\begin{align*}
\rho_{\mathbb{D}^*}^{-2}(z_k^*)|\Phi(\mu)(z_k^*)-\Phi(\nu)(z_k^*)|
&=\lim_{z \to \infty}|z^2g'_k(z)|^2|\Phi(\mu)(g_k(z))-\Phi(\nu)(g_k(z))|\\
&=|\widetilde\Phi(\mu_k)(\infty)-\widetilde\Phi(\nu_k)(\infty)|.
\end{align*}
This tends to $0$ as $k \to \infty$.
This implies that $\Phi(\mu)-\Phi(\nu) \in B_{\sharp}^X(\mathbb{D})$.
%$(2) \Rightarrow (3)$: 
%We take an arbitrary sequence $\{w_k\}_{k \in \mathbb {N}} \subset \mathbb{D}$ such that
%$w_k \in \mathbb{D} \setminus \bigcup_{i=1}^n D_{1/k}^{\eta_i}$ ($Y=\{\eta_i\}_{i=1}^n$) for every $k \in \mathbb {N}$.
%For each $k$, we choose a M\"obius transformation $h_k \in \mbox{\rm M\"ob}(\mathbb{D})$ with $h_k(0)=z_k$
%and define $\nu^{-1}_k=h_k^* s([\nu^{-1}])$. We also choose a M\"obius transformation $g_k \in \mbox{\rm M\"ob}(\mathbb{D})$
%such that $g_k^{-1} \circ f^{s([\nu^{-1}])} \circ h_k$ fixes $0$ and $1$.
%Let $z_k=g_k(0)$ for $k \in \mathbb{N}$. Then, for every $z^* \in \mathbb{D}^*$ and for every $\tilde k \in \mathbb{N}$,
%there exists some $k_0 \in \mathbb{N}$ such that
%$$
%g_k(z^*) \in \mathbb{D}^* \setminus \bigcup_{i=1}^n D_{1/\tilde k}^{\xi_i}
%$$ 
%for every $k \geq k_0$.
\end{proof}

Here are direct consequences from this theorem.

\begin{corollary}\label{DE}
For every $h\in \rm QS_{\sharp}^{X}$, the complex dilatation of the barycentric extension $E(h)$ is in $M_{\sharp}^{X}(\mathbb{D})$.
Hence, we have a global continuous section $s:T_{\sharp}^X \to M_{\sharp}^{X}(\mathbb{D})$ to the 
Teich\-m\"ul\-ler projection $\pi:M_{\sharp}^{X}(\mathbb{D}) \to T_{\sharp}^X$.
\end{corollary}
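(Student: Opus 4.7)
The plan is to derive the corollary as an immediate consequence of Theorem \ref{EMS}, by specializing to $\nu = 0$. Two routine observations underpin the argument: first, $f^0$ is the identity of $\mathbb{D}$, so $Y = f^0(X) = X$; second, the barycentric extension of the identity homeomorphism of $\mathbb{S}$ is the identity of $\mathbb{D}$, hence its complex dilatation vanishes and $s([0]) = 0$.

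Given $h \in {\rm QS}_{\sharp}^{X}$, I would pick some $\mu \in M_{\sharp}^{X}(\mathbb{D})$ whose induced boundary map $f^{\mu}|_{\mathbb{S}}$ represents the class of $h$ modulo $\mbox{\rm M\"ob}(\mathbb{S})$; thus $[\mu] \in T_{\sharp}^{X}$. Condition (4) of Theorem \ref{EMS} applied to this $\mu$ and $\nu = 0$ then reads $[\mu] \ast [0]^{-1} = [\mu] \in T_{\sharp}^{X}$, which holds by assumption. Invoking the implication $(4) \Rightarrow (3)$ yields $s([\mu]) \ast s([0])^{-1} = s([\mu]) \in M_{\sharp}^{X}(\mathbb{D})$. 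Since $s([\mu])$ is by definition the complex dilatation of $E(f^{\mu}|_{\mathbb{S}})$, which coincides with the complex dilatation of $E(h)$ up to the conformal naturality that the barycentric extension enjoys under the normalizing action of $\mbox{\rm M\"ob}(\mathbb{S})$, the first assertion follows.

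For the second assertion, I would invoke the continuity (in fact real analyticity) of the section $s: T \to M(\mathbb{D})$ recalled in Section 2. Because $T_{\sharp}^{X}$ and $M_{\sharp}^{X}(\mathbb{D})$ carry the relative topologies inherited from $T$ and $M(\mathbb{D})$, and the first part of the corollary already guarantees $s(T_{\sharp}^{X}) \subset M_{\sharp}^{X}(\mathbb{D})$, the corestriction $s: T_{\sharp}^{X} \to M_{\sharp}^{X}(\mathbb{D})$ is automatically continuous, and the section identity $\pi \circ s = \mathrm{id}_{T_{\sharp}^{X}}$ is inherited directly from the ambient identity $\pi \circ s = \mathrm{id}_T$.

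The hard work is entirely absorbed into Theorem \ref{EMS}, where the equivalence $(4) \Leftrightarrow (3)$ has been established through a delicate compactness argument using the hyperbolic invariance of the barycentric extension. Once that is in hand, the present corollary is essentially a tautology: the only bookkeeping is to verify $s([0]) = 0$, $f^0(X) = X$, and that the subspace topologies match. I therefore do not expect any genuine obstacle at this stage.
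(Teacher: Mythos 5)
Your proof is correct and follows the paper's approach of specializing Theorem \ref{EMS} to $\nu = 0$, differing only in the entry point into the equivalence: the paper verifies condition (1), namely $\Phi(\mu)\in B_{\sharp}^X(\mathbb{D}^*)$, by invoking Theorem \ref{Bers} and then applies $(1)\Rightarrow(2)$, whereas you verify condition (4), namely $[\mu]\in T_{\sharp}^X$, directly from the definition of ${\rm QS}_\sharp^X$ and apply $(4)\Rightarrow(3)$. Both routes are valid; yours is marginally more economical in that it does not need Theorem \ref{Bers} as an explicit ingredient (though the cycle $(4)\Rightarrow(1)\Rightarrow(2)\Rightarrow(3)$ in the theorem's proof of course does comparable work internally).
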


\begin{proof}
By setting $\nu=0$ in Theorem \ref{EMS}, we obtain that the condition
$s([\mu]) \in M_{\sharp}^{X}(\mathbb{D})$ is equivalent to that $\Phi(\mu) \in B_{\sharp}^X(\mathbb{D}^*)$.
Let $\mu \in M_{\sharp}^{X}(\mathbb{D})$ be the complex dilatation of any quasiconformal extension of $h\in \rm QS_{\sharp}^{X}$.
Then, the complex dilatation of $E(h)$ is $s([\mu])$.
Since $\Phi(\mu) \in B_{\sharp}^X(\mathbb{D}^*)$ by Theorem \ref{Bers}, we see that $s([\mu]) \in M_{\sharp}^{X}(\mathbb{D})$.
\end{proof}

\begin{remark}
{\rm
Examining the proof of Theorem \ref{closed} relying on \cite{HM}, we find out that 
for every $h\in \rm QS_{\sharp}^{X}$, the complex dilatation of the barycentric extension $E(h)$ of $h$ is in $M_{\sharp}^{X}(\mathbb{D})$
in the case that $X \subset \mathbb S$ is a finite set. 
This gives another proof of Corollary \ref{DE} in this case.
}
\end{remark}

\begin{corollary}\label{contractible} 
The Teich\-m\"ul\-ler space $T_{\sharp}^X$ is contractible.
\end{corollary}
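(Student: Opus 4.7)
The plan is to exhibit an explicit contraction of $T_\sharp^X$ to the basepoint $[0]$ using the continuous section $s:T_\sharp^X \to M_\sharp^X(\mathbb D)$ provided by Corollary \ref{DE}. Define
\begin{equation*}
H:[0,1] \times T_\sharp^X \to T_\sharp^X, \qquad H(t,[\mu])=\pi\bigl(t \cdot s([\mu])\bigr).
\end{equation*}
The basic idea is classical for the universal Teichm\"uller space: radially shrink the Beltrami coefficient $s([\mu])$ toward $0$ in $M(\mathbb D)$, and then project back down by $\pi$. Our only task is to verify that this radial shrinking stays inside the subspace $M_\sharp^X(\mathbb D)$, so that the homotopy actually takes values in $T_\sharp^X$.

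First I would check well-definedness. For any $[\mu]\in T_\sharp^X$, set $\mu_0=s([\mu])$, which lies in $M_\sharp^X(\mathbb D)$ by Corollary \ref{DE}. Since $L_\sharp^X(\mathbb D)$ was shown in Section 3 to be a closed linear subspace of $L^\infty(\mathbb D)$, the scalar multiple $t\mu_0$ still lies in $L_\sharp^X(\mathbb D)$ for every $t\in[0,1]$; moreover $\Vert t\mu_0\Vert_\infty \leqslant \Vert\mu_0\Vert_\infty<1$, so $t\mu_0\in M_\sharp^X(\mathbb D)$, and hence $H(t,[\mu])=\pi(t\mu_0)\in T_\sharp^X$.

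Next I would verify the endpoint conditions and continuity. At $t=1$ we have $H(1,[\mu])=\pi(s([\mu]))=[\mu]$ because $s$ is a section of $\pi$, and at $t=0$ we have $H(0,[\mu])=\pi(0)=[0]$, which is the basepoint of $T_\sharp^X$. For continuity, note that the scalar multiplication map $(t,\mu_0)\mapsto t\mu_0$ is continuous from $[0,1]\times L^\infty(\mathbb D)$ into $L^\infty(\mathbb D)$ (and stays in the open unit ball when $\Vert \mu_0\Vert_\infty<1$). Composing with the continuous section $s$ on the input side and the continuous Teichm\"uller projection $\pi$ on the output side yields joint continuity of $H$.

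No step here is genuinely hard once Corollary \ref{DE} is in hand; the only subtlety worth flagging is that without that section, one would be tempted to take an arbitrary representative $\mu\in[\mu]$ and form $\pi(t\mu)$, but this would fail to be well-defined on equivalence classes. Using $s$ picks out a distinguished representative in $M_\sharp^X(\mathbb D)$, and closedness of $L_\sharp^X(\mathbb D)$ under scalar multiplication guarantees that the straight-line homotopy through $0$ never leaves the prescribed subspace. Thus $H$ is the desired contraction of $T_\sharp^X$ to $[0]$.
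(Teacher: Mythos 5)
Your proof is correct and is essentially the paper's argument made explicit: the paper observes that $M_\sharp^X(\mathbb D)$ is contractible and invokes Corollary \ref{DE}, which amounts to exactly the homotopy $H(t,[\mu])=\pi(t\cdot s([\mu]))$ you wrote down.
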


\begin{proof}
Since $M_{\sharp}^{X}(\mathbb{D})$ is contractible, the assertion follows from Corollary \ref{DE}.
\end{proof}

\begin{corollary}\label{image}
$\beta(T_{\sharp}^X) = \beta(T)\cap B_{\sharp}^X(\mathbb{D}^*)$.
\end{corollary}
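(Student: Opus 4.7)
The plan is to establish the two inclusions separately, each obtained as a direct application of the results already proved in this section.

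For the forward inclusion $\beta(T_{\sharp}^X) \subset \beta(T)\cap B_{\sharp}^X(\mathbb{D}^*)$, I would argue as follows. Trivially $\beta(T_{\sharp}^X) \subset \beta(T)$ since $T_{\sharp}^X \subset T$. To see that $\beta(T_{\sharp}^X) \subset B_{\sharp}^X(\mathbb{D}^*)$, recall that $\beta \circ \pi = \Phi$ on $M(\mathbb{D})$ and $T_{\sharp}^X = \pi(M_{\sharp}^X(\mathbb{D}))$ by definition. Therefore
\begin{equation*}
\beta(T_{\sharp}^X) = \Phi(M_{\sharp}^X(\mathbb{D})) \subset B_{\sharp}^X(\mathbb{D}^*)
\end{equation*}
by Theorem \ref{Bers}.

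For the reverse inclusion, I would take $\varphi \in \beta(T) \cap B_{\sharp}^X(\mathbb{D}^*)$ and show that $\varphi \in \beta(T_{\sharp}^X)$. Write $\varphi = \beta([\mu]) = \Phi(\mu)$ for some $[\mu] \in T$. Apply Theorem \ref{EMS} with the special choice $\nu = 0$: note that the identity map has trivial Schwarzian, so $\Phi(0) = 0$, and the barycentric extension of the identity is the identity, so $s([0]) = 0$. Condition (1) of Theorem \ref{EMS} then reads $\Phi(\mu) \in B_{\sharp}^X(\mathbb{D}^*)$, which holds by hypothesis; consequently condition (2) gives $s([\mu]) \in L_{\sharp}^X(\mathbb{D})$. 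Since $s([\mu]) \in M(\mathbb{D})$, we conclude $s([\mu]) \in M_{\sharp}^X(\mathbb{D})$, and hence $[\mu] = \pi(s([\mu])) \in \pi(M_{\sharp}^X(\mathbb{D})) = T_{\sharp}^X$. Thus $\varphi = \beta([\mu]) \in \beta(T_{\sharp}^X)$.

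There is no real obstacle here: the corollary is essentially the equivalence $(1) \Leftrightarrow (4)$ of Theorem \ref{EMS} with $\nu = 0$, combined with Theorem \ref{Bers}. The only subtlety worth flagging explicitly is to verify that $s([0]) = 0$ and $\Phi(0) = 0$ so that the specialization of Theorem \ref{EMS} produces exactly the membership statement needed.
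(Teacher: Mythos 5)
Your proof is correct and takes essentially the same approach as the paper: the forward inclusion from Theorem \ref{Bers}, the reverse from Theorem \ref{EMS} with $\nu=0$. The paper's own proof is a two-line remark; you have merely unpacked the specialization $\nu=0$ (passing through implication $(1)\Rightarrow(2)$ and then to $T_{\sharp}^X$ via $\pi\circ s$) rather than invoking $(1)\Rightarrow(4)$ directly, but this is an immaterial choice within the same cycle of equivalences.
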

\begin{proof}
Theorem \ref{Bers} implies that $\beta(T_{\sharp}^X) \subset \beta(T)\cap B_{\sharp}^X(\mathbb{D}^*)$. 
By taking $\nu = 0$ in Theorem \ref{EMS}, 
we see that the converse inclusion is also true.
\end{proof}

\section{Holomorphic split submersion}

In this section, we will endow $T_{\sharp}^{X}$ with a complex Banach manifold structure. 
This is done by the investigations of the Bers Schwarzian derivative map $\Phi:M_{\sharp}^X(\mathbb{D})\to B_{\sharp}^X(\mathbb{D}^*)$
given in Theorem \ref{Bers}.
We note that the image of $\Phi$ is 
$\beta(T_{\sharp}^X) = \beta(T)\cap B_{\sharp}^X(\mathbb{D}^*)$ by Corollary \ref{image}, which is an open subset of $B_{\sharp}^X(\mathbb{D}^*)$.

We recall that the right translation $r_\nu$ for any $\nu \in M(\mathbb D)$ defined by
$r_\nu(\mu)=\mu \ast \nu^{-1}$ for every $\mu \in M(\mathbb D)$ is a biholomorphic automorphism of $M(\mathbb D)$.
Concerning the restriction of these automorphisms to $M_{\sharp}^X(\mathbb D)$, 
we in particular see that $r_\nu$ is a biholomorphic automorphism of $M_{\sharp}^X(\mathbb D)$
for any $\nu \in M_{\sharp}^X(\mathbb D)$ with $[\nu]=[0]$ (see \cite[Lemma 6.1]{WM}).
We also see that any equivalent Beltrami coefficients $\mu_1, \mu_2 \in M_{\sharp}^X(\mathbb D)$ 
are mapped to one another by a biholomorphic automorphism $r_\nu$ of $M_{\sharp}^X(\mathbb D)$ for 
some $\nu \in M_{\sharp}^X(\mathbb D)$ with $[\nu]=[0]$ (see \cite[Proposition 6.2]{WM}).

With the aid of these claims, we can show that the Bers Schwarzian derivative map $\Phi$ 
is a holomorphic split submersion onto its image. 

\begin{theorem}\label{submersion}
The Bers Schwarzian derivative map $\Phi:M_{\sharp}^X(\mathbb{D})\to B_{\sharp}^X(\mathbb{D}^*)$ 
is a holomorphic split submersion onto its image $\Phi(M_{\sharp}^X(\mathbb{D}))=\beta(T)\cap B_{\sharp}^X(\mathbb{D}^*)$. 
\end{theorem}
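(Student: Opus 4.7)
The plan is to verify three things in turn: (a) the restriction $\Phi_\sharp := \Phi|_{M_\sharp^X(\mathbb{D})}$ is holomorphic as a map into $B_\sharp^X(\mathbb{D}^*)$; (b) its image is open in $B_\sharp^X(\mathbb{D}^*)$; (c) it admits a holomorphic local right inverse at every point of the image. Items (a) and (b) are essentially formal given the earlier results. Theorem \ref{Bers} places the image of $\Phi_\sharp$ in $B_\sharp^X(\mathbb{D}^*)$, and since $M_\sharp^X(\mathbb{D})$ and $B_\sharp^X(\mathbb{D}^*)$ are open subsets of closed linear subspaces of $L^\infty(\mathbb{D})$ and $B(\mathbb{D}^*)$ respectively, carrying the induced Banach structures, the restriction of the classical holomorphic $\Phi$ is holomorphic between these Banach manifolds. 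Corollary \ref{image} identifies the image as $\beta(T) \cap B_\sharp^X(\mathbb{D}^*)$, which is open in the relative topology because $\beta(T)$ is open in $B(\mathbb{D}^*)$.

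The heart of the proof is (c). At the base point $\varphi_0 = 0$ I would invoke the classical Ahlfors--Weill section
$$
\sigma_{AW}(\varphi)(z) = -\tfrac{1}{2}(1-|z|^2)^2\, \bar z^{-4}\, \varphi(1/\bar z),
$$
a holomorphic right inverse of $\Phi$ on $\{\varphi \in B(\mathbb{D}^*) : \|\varphi\|_B < 2\}$. The pointwise identity $|\sigma_{AW}(\varphi)(z)| = \tfrac{1}{2}\, \rho_{\mathbb{D}^*}^{-2}(1/\bar z)\,|\varphi(1/\bar z)|$, together with the fact that the reflection $z \mapsto 1/\bar z$ establishes a bijection between compact subsets of $\mathbb{D} \cup X$ and compact subsets of $\mathbb{D}^* \cup X$, shows that $\sigma_{AW}$ carries the vanishing condition defining $B_\sharp^X(\mathbb{D}^*)$ over to that defining $M_\sharp^X(\mathbb{D})$. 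Hence $\sigma_{AW}$ restricts to a holomorphic local section of $\Phi_\sharp$ at $0$.

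For a general $\varphi_0 \in \beta(T) \cap B_\sharp^X(\mathbb{D}^*)$, I would pick $\mu_0 \in M_\sharp^X(\mathbb{D})$ with $\Phi(\mu_0) = \varphi_0$ and reduce to the base case via right translation. The map $r_{\mu_0}(\mu) = \mu \ast \mu_0^{-1}$ is a biholomorphic automorphism of $M(\mathbb{D})$, and it descends to a biholomorphism $\hat R_{\mu_0}$ of $\beta(T)$ with $\hat R_{\mu_0}(\varphi_0) = 0$ and $\hat R_{\mu_0} \circ \Phi = \Phi \circ r_{\mu_0}$. By Theorem \ref{EMS} (the equivalence of (1) and (4)), $\hat R_{\mu_0}$ carries $\beta(T) \cap B_\sharp^X(\mathbb{D}^*)$ into $\beta(T) \cap B_\sharp^Y(\mathbb{D}^*)$ with $Y = f^{\mu_0}(X)$. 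On a small neighborhood of $\varphi_0$, define
$$
\sigma(\varphi) := \sigma_{AW}(\hat R_{\mu_0}(\varphi)) \ast \mu_0.
$$
Each ingredient is holomorphic and $\Phi \circ \sigma = \mathrm{id}$. To conclude that $\sigma$ takes values in $M_\sharp^X(\mathbb{D})$, one applies the composition formula for Beltrami coefficients: if $\lambda \in M_\sharp^Y(\mathbb{D})$ and $\mu_0 \in M_\sharp^X(\mathbb{D})$, then $\lambda \ast \mu_0 \in M_\sharp^X(\mathbb{D})$, because $(f^{\mu_0})^{-1}$ sends compact subsets of $\mathbb{D} \cup Y$ to compact subsets of $\mathbb{D} \cup X$, $f^{\mu_0}$ being a homeomorphism of $\overline{\mathbb{D}}$ mapping $X$ onto $Y$.

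The main obstacle is the general case, where $r_{\mu_0}$ does not itself preserve $M_\sharp^X(\mathbb{D})$: the piecewise-vanishing condition transforms from relative to $X$ into relative to $Y = f^{\mu_0}(X)$ under translation. Theorem \ref{EMS} is the key tool for locating the translated Bers map inside $B_\sharp^Y(\mathbb{D}^*)$, and a careful tracking of the composition formula is required to return to $M_\sharp^X(\mathbb{D})$ after applying Ahlfors--Weill in the $Y$-subspace and post-composing by $\mu_0$.
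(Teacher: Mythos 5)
Your argument is correct, and it takes a genuinely different route from the paper's. The paper constructs the local section directly at a general point: after normalizing $\mu = s([\mu])$ via the barycentric section (so that $f^\mu$ is bi-Lipschitz in the hyperbolic metric), it forms the quasiconformal reflection $\gamma = f_\phi \circ j \circ f_\phi^{-1}$ and applies Ahlfors' 1963 reflection construction to get the Beltrami $\nu_\psi$ of a quasiconformal extension of $f_\psi$; the membership $\nu_\psi \in M_\sharp^X(\mathbb{D})$ is then read off from a pointwise estimate relating $|\mu_\psi \circ f_\phi(\zeta)|$ to $\rho_{\mathbb{D}^*}^{-2}(\zeta^*)|\psi(\zeta^*) - \phi(\zeta^*)|$. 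In effect, this is the Ahlfors--Weill section conjugated by $f_\phi$ intrinsically, staying entirely inside $M_\sharp^X(\mathbb{D})$ and $B_\sharp^X(\mathbb{D}^*)$ at the cost of a more involved geometric estimate. Your proof instead isolates the model case $\varphi_0 = 0$, where the explicit Ahlfors--Weill formula makes the map $B_\sharp^X \to M_\sharp^X$ transparent via the pointwise identity under the reflection $j$, and then transports this section by the right translation $r_{\mu_0}$. The price you pay is that $r_{\mu_0}$ moves $X$ to $Y = f^{\mu_0}(X)$, so you have to pass through $B_\sharp^Y$ and $M_\sharp^Y$ and return to $M_\sharp^X$ via the composition $\lambda \mapsto \lambda \ast \mu_0$. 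This works because $(f^{\mu_0})^{-1}$ is a homeomorphism of $\overline{\mathbb{D}}$ carrying $\mathbb{D}\cup Y$ onto $\mathbb{D}\cup X$, and $|(\lambda \ast \mu_0) - \mu_0| \lesssim |\lambda \circ f^{\mu_0}|$ by the chain rule, so the piecewise-vanishing condition is preserved. Your route leans more heavily on Theorem \ref{EMS} (to show $\hat R_{\mu_0}$ carries $\beta(T)\cap B_\sharp^X(\mathbb{D}^*)$ into $\beta(T)\cap B_\sharp^Y(\mathbb{D}^*)$), whereas the paper only needs the special case encapsulated in Corollary \ref{DE}. Both arguments are sound; yours trades the Ahlfors reflection estimate for the extra book-keeping with $Y$, which you correctly identify and resolve.
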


\begin{proof}
Since $\Phi: M(\mathbb{D})\to B(\mathbb{D^*})$
is holomorphic and since $M_\sharp^X(\mathbb{D})$ and $B_\sharp^X(\mathbb{D}^*)$ are closed subspaces in the relative topology,
$\Phi:M_\sharp^X(\mathbb{D})\to B_\sharp^X(\mathbb{D}^*)$ is also holomorphic.
It remains to show that $\Phi$ is a split submersion onto its image $\Phi(M_\sharp^X(\mathbb{D}))$.
This is equivalent to showing that for every $\mu \in M_\sharp^X(\mathbb{D})$,
there is a holomorphic map $\sigma:U_\phi \to M_\sharp^X(\mathbb{D})$ defined on some neighborhood $U_\phi \subset 
\Phi(M_\sharp^X(\mathbb{D}))$ of $\phi=\Phi(\mu)$ such that $\sigma(\phi)=\mu$ and $\Phi \circ \sigma={\rm id}_{U_\phi}$.
The existence of some local holomorphic section can be given by a standard argument below.
 
We first complete showing that $\Phi$ is a split submersion by
assuming that there is a local holomorphic section $\sigma:U_\phi \to M_\sharp^X(\mathbb{D})$ at $\phi=\Phi(\mu)$.
We set $\nu=\mu^{-1} \ast \sigma(\phi)$, which belongs to $M_\sharp^X(\mathbb{D})$. We see that $r_\nu$ is a biholomorphic automorphism of $M_\sharp^X(\mathbb{D})$ which satisfies $\pi \circ r_\nu=\pi$ and
$r_\nu(\sigma(\phi))=\mu$.
Then, we obtain the required local section $r_\nu \circ \sigma$ on $U_\phi$ passing through $\mu$.

In the rest of the proof, we show the existence of a local holomorphic section. 
Let $\phi = \Phi(\mu)$ for a given $\mu \in M_\sharp^X(\mathbb{D})$. 
Without loss of generality, we may assume that $\mu = s([\mu])$, that is, $f^{\mu}$ is the barycentric extension of $f^{\mu}|_{\mathbb{S}}$. Here, $s: T \to M(\mathbb{D})$ is the barycentric section 
which maps $T_\sharp^X$ into $M_\sharp^X(\mathbb{D})$ by Corollary \ref{DE}. For
the quasiconformal homeomorphism $f_\phi=f_{\mu}:\widehat {\mathbb{C}} \to \widehat {\mathbb{C}}$ that is conformal on $\mathbb{D}^*$,
we set
$D=f_\phi(\mathbb{D})$, $D^* = f_\phi(\mathbb{D}^*)$, and $\gamma = f_{\phi}\circ j \circ f_{\phi}^{-1}$
for the reflection $j:\zeta \mapsto \zeta^*$ with respect to $\mathbb{S}$. 
We may assume that $f_{\phi}$ is normalized so that $\lim_{z \to \infty}(f_\phi(z)-z)=0$.
Since the barycentric extension $f^\mu$ is a bi-Lipschitz diffeomorphism with respect to the hyperbolic metric,
we see that so is $f_{\phi}|_{\mathbb{D}}$, and hence, the quasiconformal reflection $\gamma: D \to D^*$ is
a bi-Lipschitz diffeomorphism with respect to the hyperbolic metrics on $D$ and $D^*$.

Ahlfors \cite{Ah63} (see also \cite{GL, Le})
showed that there exists a constant $C_1 \geqslant 1$ depending only 
on $\Vert\mu\Vert_{\infty}$ such that 
\begin{equation}\label{1}
\frac{1}{C_1}\leqslant |\gamma(z)-z|^2\rho_{D^*}^{-2}(\gamma(z))|\bar{\partial}\gamma(z)|\leqslant C_1
\end{equation}
for every $z\in D$, where $\rho_{D^*}(z)$ is the hyperbolic density on $D^*$. 
We set 
$$
B_{\varepsilon}(\phi)=\{\psi \in B_\sharp^X(\mathbb{D}^*) \mid \Vert\psi - \phi\Vert_{B}< \varepsilon  \}
$$ 
for $\varepsilon > 0$. 
For each $\psi \in B_{\varepsilon}(\phi)$, 
there exists a unique locally univalent holomorphic function $f_{\psi}$ on $\mathbb{D}^*$ 
with the normalization as above such that 
$\mathcal{S}(f_{\psi})= \psi$. Let $g_{\psi} = f_{\psi}\circ f_{\phi}^{-1}|_{D^*}$. 
Then, we have that $\mathcal{S}(g_{\psi})\circ f_\phi (f'_\phi)^2 = \psi - \phi$ and 
$\sup_{z^*\in D^*}\rho_{D^*}^{-2}(z^*)|\mathcal{S}(g_{\psi})(z^*)|=\Vert\psi - \phi\Vert_{B}$.

When $\varepsilon>0$ is sufficiently small, it was proved in \cite{Ah63}
that $g_{\psi}$ is univalent (conformal) and can be extended to a quasiconformal homeomorphism of
$\widehat {\mathbb{C}}$ whose complex dilatation $\mu_{\psi}$ on $D$ has the form 
$$
%\begin{equation}\label{2}
 \mu_{\psi}(z)=\frac{\mathcal{S}(g_{\psi})(\gamma(z))(\gamma(z) - z)^2\bar{\partial}\gamma(z)}{2 + \mathcal{S}(g_{\psi})(\gamma(z))(\gamma(z) - z)^2\partial \gamma(z)}.
%\end{equation}
$$
We set $U_\phi=B_\varepsilon(\phi)$ for this $\varepsilon>0$.
Then by (\ref{1}), every $\psi \in U_\phi$ satisfies
\begin{equation}\label{3}
|\mu_{\psi}(z)|\leqslant C_2 |\mathcal{S}(g_{\psi})(\gamma(z))|\rho_{D^*}^{-2}(\gamma(z)) \qquad (z\in D)    
\end{equation}
for some constant $C_2>0$, which also depends only on $\Vert\mu\Vert_{\infty}$.

Consequently, $f_{\psi}=g_{\psi}\circ f_\phi$ is conformal on $\mathbb{D}^*$ 
and has a quasiconformal extension to $\widehat {\mathbb{C}}$ whose complex dilatation $\nu_{\psi}$ on $\mathbb{D}$ is given as
\begin{equation}\label{4}
    \nu_{\psi}=\frac{\mu+(\mu_{\psi}\circ f_\phi)\tau}{1+\bar{\mu}(\mu_{\psi}\circ f_\phi)\tau}, 
    \qquad \tau = \frac{\overline{\partial f_\phi}}{\partial f_\phi}. 
\end{equation}
It is well known that $\nu_{\psi}$ depends holomorphically on $\psi$. Now it follows from (\ref{3}) that 
\begin{equation*}
    \begin{split}
        |\mu_{\psi}(f_\phi(\zeta))|&\leqslant C_2 |\mathcal{S}(g_{\psi})(\gamma(f_\phi(\zeta)))|\rho_{D^*}^{-2}(\gamma(f_\phi(\zeta)))\\
        & = C_2 |\mathcal{S}(g_{\psi})(f_\phi(j(\zeta)))|\rho_{D^*}^{-2}(f_\phi(j(\zeta)))\\
        & = C_2 |\psi(j(\zeta))-\phi(j(\zeta))|\rho_{\mathbb{D}^*}^{-2}(j(\zeta))\\
        & = C_2 |\psi(\zeta^*)-\phi(\zeta^*)|\rho_{\mathbb{D}^*}^{-2}(\zeta^*)\\
    \end{split}
\end{equation*}
for every $\zeta \in \mathbb{D}$ with $\zeta^*=j(\zeta) \in \mathbb{D}^*$.

Since $\psi,\, \phi \in B_\sharp^X(\mathbb{D}^*)$,
the above estimate implies that $\mu_{\psi}\circ f \in M_\sharp^X(\mathbb{D})$.
Then, we see from (\ref{4}) that $\nu_{\psi} \in M_\sharp^X(\mathbb{D})$. 
Since $\Phi(\nu_{\psi})=\psi$, we conclude that $\sigma: U_\phi \to M_{\sharp}^{X}(\mathbb{D})$ 
defined by $\sigma(\psi)=\nu_\psi$
is a local holomorphic section to $\Phi$. This completes the proof. 
\end{proof}

\begin{corollary}\label{complexstructure}
The Bers embedding
$\beta: T_{\sharp}^X \to B_{\sharp}^X(\mathbb{D}^*)$ is a homeomorphism onto the domain 
$\beta(T) \cap B_{\sharp}^X(\mathbb{D}^*)$ in $B_{\sharp}^X(\mathbb{D}^*)$. Hence,
the Teich\-m\"ul\-ler space $T_{\sharp}^X$ has the complex structure modeled on the complex Banach space $B_{\sharp}^X(\mathbb{D}^*)$.
Under this complex structure, 
the projection $\pi:M_{\sharp}^X(\mathbb{D}) \to T_{\sharp}^X$ is also a holomorphic split submersion.
\end{corollary}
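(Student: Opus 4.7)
The plan is to deduce the corollary directly from the factorization $\Phi = \beta \circ \pi$ combined with Theorem \ref{submersion} and Corollary \ref{image}. Corollary \ref{image} already identifies the image $\beta(T_\sharp^X) = \beta(T)\cap B_\sharp^X(\mathbb{D}^*)$; since $\beta(T)$ is open in $B(\mathbb{D}^*)$ by the classical Bers embedding theorem and $B_\sharp^X(\mathbb{D}^*)$ carries the subspace topology from $B(\mathbb{D}^*)$, this image is an open subset of $B_\sharp^X(\mathbb{D}^*)$.

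First I would verify that $\beta|_{T_\sharp^X}$ is a continuous bijection onto $\beta(T)\cap B_\sharp^X(\mathbb{D}^*)$: injectivity is inherited from the injective $\beta\colon T \to B(\mathbb{D}^*)$, surjectivity onto this set is exactly Corollary \ref{image}, and continuity follows from continuity of the full Bers embedding on $T$ together with the fact that $T_\sharp^X$ is equipped with the subspace topology from $T$ (equivalently, with the quotient topology from $M_\sharp^X(\mathbb{D})$ under $\pi$). To upgrade this to a homeomorphism, I would invoke the local holomorphic sections $\sigma\colon U_\phi \to M_\sharp^X(\mathbb{D})$ of $\Phi$ constructed in the proof of Theorem \ref{submersion}: from $\Phi\circ\sigma = \mathrm{id}_{U_\phi}$ and injectivity of $\beta$ one reads $\beta^{-1}|_{U_\phi} = \pi\circ\sigma$, which is continuous. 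Hence $\beta$ is a homeomorphism onto the open set $\beta(T)\cap B_\sharp^X(\mathbb{D}^*)$, and we may transport the complex Banach manifold structure of $B_\sharp^X(\mathbb{D}^*)$ back to $T_\sharp^X$ via $\beta$.

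For the final assertion, once this complex structure is in place, $\beta$ becomes biholomorphic onto its image by construction. Since $\Phi = \beta \circ \pi$ is a holomorphic split submersion by Theorem \ref{submersion}, composing on the left with the biholomorphism $\beta^{-1}$ shows that $\pi = \beta^{-1}\circ \Phi\colon M_\sharp^X(\mathbb{D}) \to T_\sharp^X$ is holomorphic; and the local sections $\sigma\circ\beta$ of $\pi$, obtained by post-composing the local holomorphic sections of $\Phi$ with $\beta$ read in the chart, are holomorphic, so $\pi$ is itself a holomorphic split submersion.

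I do not expect a substantive obstacle: the argument is formal once Theorem \ref{submersion} is in hand. The only point requiring a little care is the compatibility between the a priori quotient topology on $T_\sharp^X$ and the complex structure pulled back through $\beta$, but this is precisely what the identity $\beta^{-1}|_{U_\phi} = \pi\circ\sigma$ provides.
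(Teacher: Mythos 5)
Your proposal is correct and follows essentially the same route as the paper: both deduce continuity of $\beta$ from continuity of $\Phi$, and continuity of $\beta^{-1}$ from the local sections $\sigma$ of $\Phi$ constructed in Theorem \ref{submersion} via the identity $\beta^{-1}|_{U_\phi} = \pi\circ\sigma$. Your final paragraph on $\pi$ being a holomorphic split submersion is slightly more explicit than the paper's terse proof, but the content is the same.
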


\begin{proof}
By the continuity of $\Phi:M_{\sharp}^X(\mathbb{D}) \to B_{\sharp}^X(\mathbb{D}^*)$, we see that $\beta$ is continuous.
For the other direction, the existence of the local continuous section to $\Phi$
shown in Theorem \ref{submersion} together with the continuity of the projection $\pi$
ensures the continuity of the inverse $\beta^{-1}:\beta(T) \cap B_{\sharp}^X(\mathbb{D}^*) \to T_{\sharp}^X$.
These facts prove that $\beta$ is a homeomorphism onto the image.
\end{proof}

Finally, we note that the corresponding result to Proposition \ref{L} is also valid for
the space of the holomorphic quadratic differentials.

\begin{proposition}\label{B}
For $X = \{\xi_1,\dots, \xi_n\} \subset \mathbb{S}$, 
$B_{\sharp}^X(\mathbb{D}^*) = B_{\sharp}^{\xi_1}(\mathbb{D}^*)+\dots + B_{\sharp}^{\xi_n}(\mathbb{D}^*)$.
\end{proposition}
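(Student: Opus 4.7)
The inclusion $\supset$ is immediate because each $B_{\sharp}^{\xi_i}(\mathbb{D}^*)$ is contained in $B_{\sharp}^X(\mathbb{D}^*)$ and the latter is a linear subspace. For the reverse inclusion $\subset$, the strategy of Proposition \ref{L} is unavailable: we cannot simply multiply a holomorphic quadratic differential $\varphi$ by the indicator functions of the sectors $E_i$. The plan is instead to linearize the problem by passing to the derivative of $\Phi$ at $0$, decompose the preimage inside $L_{\sharp}^X(\mathbb{D})$ via Proposition \ref{L}, and then recover a decomposition of $\varphi$ by applying $d\Phi(0)$ to each piece.

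First I would invoke Theorem \ref{submersion} at the point $0 \in M_{\sharp}^X(\mathbb{D})$. Since $\Phi(M_{\sharp}^X(\mathbb{D})) = \beta(T) \cap B_{\sharp}^X(\mathbb{D}^*)$ (Corollary \ref{image}) is open in $B_{\sharp}^X(\mathbb{D}^*)$ and contains $0 = \Phi(0)$, there exists a local holomorphic section $\sigma: U_0 \to M_{\sharp}^X(\mathbb{D})$ with $\sigma(0) = 0$, defined on a neighborhood $U_0$ of $0$. Differentiating at $0$ yields a bounded linear map
$$
R := d\sigma(0): B_{\sharp}^X(\mathbb{D}^*) \to L_{\sharp}^X(\mathbb{D})
$$
(the target is correct because $M_{\sharp}^X(\mathbb{D})$ is an open subset of the closed subspace $L_{\sharp}^X(\mathbb{D})$, whose tangent space at $0$ is $L_{\sharp}^X(\mathbb{D})$ itself) which satisfies $d\Phi(0) \circ R = \mathrm{id}_{B_{\sharp}^X(\mathbb{D}^*)}$. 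Given $\varphi \in B_{\sharp}^X(\mathbb{D}^*)$, I would set $\mu := R(\varphi) \in L_{\sharp}^X(\mathbb{D})$, so that $d\Phi(0)\mu = \varphi$. By Proposition \ref{L}, write $\mu = \mu_1 + \cdots + \mu_n$ with $\mu_i \in L_{\sharp}^{\xi_i}(\mathbb{D})$, and set $\varphi_i := d\Phi(0)\mu_i$; then trivially $\varphi = \varphi_1 + \cdots + \varphi_n$.

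It remains to verify that $\varphi_i \in B_{\sharp}^{\xi_i}(\mathbb{D}^*)$ for each $i$. For any $t > 0$ small enough that $t\mu_i \in M_{\sharp}^{\xi_i}(\mathbb{D})$, Theorem \ref{Bers} gives $\Phi(t\mu_i) \in B_{\sharp}^{\xi_i}(\mathbb{D}^*)$. Dividing by $t$ and letting $t \to 0^+$, the Fr\'echet differentiability of $\Phi$ at $0$ yields $\Phi(t\mu_i)/t \to d\Phi(0)\mu_i = \varphi_i$ in the norm of $B(\mathbb{D}^*)$. Since $B_{\sharp}^{\xi_i}(\mathbb{D}^*)$ is a closed subspace of $B(\mathbb{D}^*)$ (by the same argument used for $L_{\sharp}^X(\mathbb{D})$), the limit $\varphi_i$ lies in $B_{\sharp}^{\xi_i}(\mathbb{D}^*)$, which completes the proof.

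The main conceptual obstacle is precisely bypassing the nonlinearity of $\Phi$, since the sector-decomposition trick of Proposition \ref{L} destroys holomorphicity. The bounded linear right inverse $R$ of $d\Phi(0)$ sending $B_{\sharp}^X$ into $L_{\sharp}^X$ is what makes the linearization effective, and it comes essentially for free from Theorem \ref{submersion}. Alternatively, one could write $R$ down explicitly through the Ahlfors--Weill type formula appearing in the proof of Theorem \ref{submersion} (at $\phi = 0$ it reduces to $R(\psi)(z) = -\tfrac{1}{2}(1-|z|^2)^2\,\overline{z}^{-4}\,\psi(1/\overline{z})$), which is manifestly local in the sense that $R(\psi)(z)$ depends only on $\psi$ at the reflected point $1/\overline{z}$, making the inclusion $R(B_{\sharp}^X(\mathbb{D}^*)) \subset L_{\sharp}^X(\mathbb{D})$ transparent.
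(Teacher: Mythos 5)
Your argument is essentially the same as the paper's: both linearize via $d_0\Phi$, apply Proposition~\ref{L} to decompose in $L_{\sharp}^X(\mathbb{D})$, and use Theorem~\ref{submersion} to get a preimage (surjectivity / local section) so that the decomposition can be pushed forward. Your write-up is in fact a bit more careful than the paper's: the paper's chain of equalities silently assumes $d_0\Phi(L_{\sharp}^{\xi_i}(\mathbb{D})) \subset B_{\sharp}^{\xi_i}(\mathbb{D}^*)$, whereas you justify it explicitly by the limiting argument $\Phi(t\mu_i)/t \to d\Phi(0)\mu_i$ together with Theorem~\ref{Bers} and the closedness of $B_{\sharp}^{\xi_i}(\mathbb{D}^*)$.
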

\begin{proof}
For the Bers Schwarzian derivative map $\Phi: M_{\sharp}^{X}(\mathbb{D}) \to B_{\sharp}^X(\mathbb{D}^*)$, we consider 
its derivative $d_0\Phi: L_{\sharp}^{X}(\mathbb{D})\to B_{\sharp}^{X}(\mathbb{D}^*)$ at $0 \in M_{\sharp}^{X}(\mathbb{D})$. By Proposition \ref{L}, $L_{\sharp}^X(\mathbb{D}) = L_*^{\xi_1}(\mathbb{D})+\dots +L_{\sharp}^{\xi_n}(\mathbb{D})$. Since $d_0\Phi$ is a linear map, we see that 
\begin{equation*}
    \begin{split}
      d_0\Phi(L_{\sharp}^X(\mathbb{D})) & = d_0\Phi(L_{\sharp}^{\xi_1}(\mathbb{D})+\dots +L_{\sharp}^{\xi_n}(\mathbb{D})) \\
     & = d_0\Phi(L_{\sharp}^{\xi_1}(\mathbb{D}))+\dots +d_0\Phi(L_{\sharp}^{\xi_n}(\mathbb{D})) \\
     & = B_{\sharp}^{\xi_1}(\mathbb{D}^*)+\dots+B_{\sharp}^{\xi_n}(\mathbb{D}^*).\\
    \end{split}
\end{equation*}
Since $\Phi$ is a submersion by Theorem \ref{submersion}, $d_0\Phi: L_{\sharp}^{X}(\mathbb{D})\to B_{\sharp}^{X}(\mathbb{D}^*)$ is surjective, 
namely, $d_0\Phi(L_{\sharp}^X(\mathbb{D}^*)) = B_{\sharp}^X(\mathbb{D}^*)$. This completes the proof.
\end{proof}

\section{Quotient Teich\-m\"ul\-ler spaces and quotient Bers embedding}

We consider two quotients:
(1) $T_0 \backslash T_{\sharp}^X$; (2) $T_{\sharp}^X \backslash T$.
In both cases, we will prove that the quotient Bers embedding is well-defined and injective.
Moreover, by verifying that it is a homeomorphism onto the image in the quotient Banach space,
we provide a complex Banach manifold structure for each of them. For (1), this is essentially given by the
theory of the asymptotic Teich\-m\"ul\-ler space $AT=T_0 \backslash T$. For (2), we use Theorem \ref{EMS}
to introduce the equivalence relation that defines the quotient.

It is known that $T_0$ is a subgroup of $T$ under the operation $\ast$ given by
$[\mu] \ast [\nu]=[\mu \ast \nu]$ for any $[\mu], [\nu] \in T$. Then, the right coset 
$T_0 \backslash T$ is defined as the asymptotic Teich\-m\"ul\-ler space $AT$.
Let $p:T \to AT$ be the quotient projection and $P:B(\mathbb D^*) \to B_0(\mathbb D^*) \backslash B(\mathbb D^*)$
the projection onto the quotient Banach space $B_0(\mathbb D^*) \backslash B(\mathbb D^*)$.
By the results in \cite{GS}, the Bers embedding $\beta:T \to B(\mathbb D^*)$ is projected down to
a well-defined map $\hat \beta:AT \to B_0(\mathbb D^*) \backslash B(\mathbb D^*)$ by
$\hat \beta=P \circ \beta \circ p^{-1}$ and it is a local homeomorphism onto the image.
We call this map $\hat \beta$ the quotient Bers embedding.
Later, it was proved that $\hat \beta$ is in fact a global homeomorphism onto the image
(see \cite{GL} and \cite{EMS}).

As $T_{\sharp}^X$ is a closed subspace of $T$, the quotient of $T_{\sharp}^X$ by $T_0$ is defined 
to be $T_0 \backslash T_{\sharp}^X=p(T_{\sharp}^X)$ as a closed subspace of $AT$.
As $B_{\sharp}^X(\mathbb D^*)$ is also a closed subspace of $B(\mathbb D^*)$,
we immediately see the following:

\begin{proposition}\label{AT}
The restriction of $\hat \beta:AT \to B_0(\mathbb D^*) \backslash B(\mathbb D^*)$ to 
$T_0 \backslash T_{\sharp}^X$ defines a homeo\-morphism 
$\hat \beta:T_0 \backslash T_{\sharp}^X \to B_0(\mathbb D^*) \backslash B_{\sharp}^X(\mathbb D^*)$
onto the image $\hat \beta(AT) \cap (B_0(\mathbb D^*) \backslash B_{\sharp}^X(\mathbb D^*))$.
Hence, $T_0 \backslash T_{\sharp}^X$ is endowed with the complex structure modeled on $B_0(\mathbb D^*) \backslash B_{\sharp}^X(\mathbb D^*)$.
\end{proposition}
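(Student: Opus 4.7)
The plan is to piggyback on the known fact that $\hat\beta:AT \to B_0(\mathbb D^*) \backslash B(\mathbb D^*)$ is a global homeomorphism onto its image, and check that its restriction to the closed subspace $T_0 \backslash T_{\sharp}^X$ cooperates with the closed subspace $B_0(\mathbb D^*) \backslash B_{\sharp}^X(\mathbb D^*)$ on the target side. The first thing I would record is that $B_0(\mathbb D^*) \subset B_{\sharp}^X(\mathbb D^*)$: given $\varphi \in B_0(\mathbb D^*)$ and $\varepsilon>0$, the vanishing of $\rho_{\mathbb D^*}^{-2}(z)|\varphi(z)|$ along the entire circle $\mathbb S$ keeps the set $\{\rho_{\mathbb D^*}^{-2}|\varphi| \geqslant \varepsilon\}$ a positive Euclidean distance away from $\mathbb S$, so its closure in $\widehat{\mathbb C}$ sits inside $\mathbb D^* \cup \{\infty\} \subset \mathbb D^* \cup X$ and is therefore compact. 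Since $B_{\sharp}^X(\mathbb D^*)$ is closed in $B(\mathbb D^*)$, the canonical map $B_0(\mathbb D^*) \backslash B_{\sharp}^X(\mathbb D^*) \to B_0(\mathbb D^*) \backslash B(\mathbb D^*)$ is a topological embedding onto a closed subspace.

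Next I would pin down the image by invoking Corollary \ref{image}. The inclusion $\hat\beta(T_0 \backslash T_{\sharp}^X) \subset \hat\beta(AT) \cap (B_0(\mathbb D^*) \backslash B_{\sharp}^X(\mathbb D^*))$ is immediate from $\beta(T_{\sharp}^X) = \beta(T) \cap B_{\sharp}^X(\mathbb D^*)$. For the reverse direction, take $[\varphi]_0$ in the right-hand side with a representative $\varphi \in B_{\sharp}^X(\mathbb D^*)$, and pick $[\mu] \in T$ with $\hat\beta(p([\mu])) = [\varphi]_0$; then $\beta([\mu]) - \varphi$ lies in $B_0(\mathbb D^*) \subset B_{\sharp}^X(\mathbb D^*)$, so $\beta([\mu]) \in \beta(T) \cap B_{\sharp}^X(\mathbb D^*)$, and Corollary \ref{image} forces $[\mu] \in T_{\sharp}^X$. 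Injectivity of the restriction is inherited directly from the injectivity of $\hat\beta$.

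For the topological and complex-analytic conclusions, continuity of the restriction in both directions is inherited from the fact that $\hat\beta$ is already a homeomorphism onto its image in $B_0(\mathbb D^*) \backslash B(\mathbb D^*)$, once we identify $B_0(\mathbb D^*) \backslash B_{\sharp}^X(\mathbb D^*)$ topologically with its closed subspace as in the first paragraph. To transfer the complex Banach manifold structure I would use that $\beta(T)$ is open in $B(\mathbb D^*)$ and that the quotient projection $P$ by the closed subspace $B_0(\mathbb D^*)$ is open, so $\hat\beta(AT) = P(\beta(T))$ is open in $B_0(\mathbb D^*) \backslash B(\mathbb D^*)$; hence the image $\hat\beta(AT) \cap (B_0(\mathbb D^*) \backslash B_{\sharp}^X(\mathbb D^*))$ is open in $B_0(\mathbb D^*) \backslash B_{\sharp}^X(\mathbb D^*)$, and pulling back through the homeomorphism endows $T_0 \backslash T_{\sharp}^X$ with the asserted complex structure. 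The main (mild) obstacle throughout is simply the topology identification between the intrinsic Banach quotient of $B_{\sharp}^X(\mathbb D^*)$ and the subspace topology inherited from the bigger quotient, which is resolved once the closed-subspace embedding $B_{\sharp}^X(\mathbb D^*) \hookrightarrow B(\mathbb D^*)$ is observed to descend to the quotients.
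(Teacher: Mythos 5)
Your proof is correct and elaborates exactly the reasoning the paper treats as immediate: restrict the known global homeomorphism $\hat\beta$ to the closed subspace $T_0\backslash T_{\sharp}^X$, identify the quotient $B_0(\mathbb D^*)\backslash B_{\sharp}^X(\mathbb D^*)$ isometrically with a closed subspace of $B_0(\mathbb D^*)\backslash B(\mathbb D^*)$, and use Corollary \ref{image} to pin down the image. The paper gives no written proof, and your filling-in of the two points it leaves tacit — that $B_0(\mathbb D^*)\subset B_{\sharp}^X(\mathbb D^*)$ so the target quotient is well-defined, and that the reverse inclusion for the image follows because any $\psi\in\beta(T)$ with $P(\psi)\in B_0(\mathbb D^*)\backslash B_{\sharp}^X(\mathbb D^*)$ differs from a $B_{\sharp}^X$-element by something in $B_0\subset B_{\sharp}^X$ — is exactly what is needed.
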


Concerning (2), we first introduce the following equivalence relation
with respect to $T_{\sharp}^X$:
$[\mu]$ and $[\nu]$ in $T$ are equivalent by definition if
$[\mu] \ast [\nu]^{-1} \in T_{\sharp}^Y$ for $Y=f^\nu(X)$.
We denote this equivalence by $[\mu] \sim_X [\nu]$.
It is easy to check that $\sim_X$ is an equivalence relation in $T$.
Then, denoting the set of all equivalence classes by $T/\!\sim_X$, we define the quotient projection by
$p_X:T \to T/\!\sim_X$.
On the contrary, as $B_{\sharp}^X(\mathbb D^*)$ is a closed subspace of $B(\mathbb D^*)$,
the projection $P_X:B(\mathbb D^*) \to B_{\sharp}^X(\mathbb D^*) \backslash B(\mathbb D^*)$ onto the quotient Banach space is
given as usual. Under these circumstances, we can obtain the following result:

\begin{theorem}\label{quotientBers}
The quotient Bers embedding $\hat \beta_X:T/\!\sim_X \to B_{\sharp}^X(\mathbb D^*) \backslash B(\mathbb D^*)$
is well-defined by $\hat \beta_X=P_X \circ \beta \circ p_X^{-1}$, and it is a homeomorphism onto the image.
Hence, $T/\!\sim_X$ is endowed with the complex structure modeled on $B_{\sharp}^X(\mathbb D^*) \backslash B(\mathbb D^*)$.
\end{theorem}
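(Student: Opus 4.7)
The plan is to reduce everything to Theorem \ref{EMS}, which characterizes the $\sim_X$ equivalence in terms of $B_{\sharp}^X(\mathbb{D}^*)$-difference of Schwarzian derivatives, together with the classical fact that the full Bers Schwarzian derivative map $\Phi:M(\mathbb{D})\to B(\mathbb{D}^*)$ is a holomorphic split submersion (the classical analog of Theorem \ref{submersion} for the full universal Teich\-m\"ul\-ler space). Well-definedness and injectivity of $\hat\beta_X$ can be handled simultaneously, since both amount to the equivalence
\[
[\mu]\sim_X[\nu] \iff \beta([\mu])-\beta([\nu])\in B_{\sharp}^X(\mathbb{D}^*),
\]
which is exactly $(1)\Leftrightarrow(4)$ of Theorem \ref{EMS} applied with $Y=f^{\nu}(X)$ and the identification $\beta([\mu])=\Phi(\mu)$. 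Continuity of $\hat\beta_X$ then follows automatically from the continuity of $P_X\circ\beta:T\to B_{\sharp}^X(\mathbb{D}^*)\backslash B(\mathbb{D}^*)$ and the universal property of the quotient topology on $T/\!\sim_X$.

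The heart of the proof is the continuity of the inverse, which I plan to obtain by constructing local continuous sections of $\hat\beta_X$. Given $[\hat\phi_0]\in\hat\beta_X(T/\!\sim_X)$, choose $[\mu_0]\in T$ with $\phi_0=\beta([\mu_0])$ satisfying $P_X(\phi_0)=[\hat\phi_0]$. The classical split submersion property of $\Phi$ produces an open ball $U\subset\beta(T)\subset B(\mathbb{D}^*)$ around $\phi_0$ together with a holomorphic section $\sigma:U\to M(\mathbb{D})$ of $\Phi$. The continuous composition
\[
F=p_X\circ\pi\circ\sigma:U\to T/\!\sim_X
\]
descends through the restriction $P_X|_U$ to a well-defined map $\tilde F:P_X(U)\to T/\!\sim_X$: indeed, $P_X(\psi)=P_X(\psi')$ gives $\psi-\psi'\in B_{\sharp}^X(\mathbb{D}^*)$, hence $\pi(\sigma(\psi))\sim_X\pi(\sigma(\psi'))$ by the $(1)\Rightarrow(4)$ direction of Theorem \ref{EMS}. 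Since $P_X$ is an open map (being the quotient of a Banach space by a closed subspace), $\tilde F$ is continuous, and a direct computation shows that $\tilde F$ inverts $\hat\beta_X$ on the open neighborhood $P_X(U)\cap\hat\beta_X(T/\!\sim_X)$ of $[\hat\phi_0]$. A continuous bijection onto its image that is a local homeomorphism is a global homeomorphism, so $\hat\beta_X$ is a homeomorphism onto the image.

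To finish, the image equals $P_X(\beta(T))$, which is open in $B_{\sharp}^X(\mathbb{D}^*)\backslash B(\mathbb{D}^*)$ because $\beta(T)$ is open in $B(\mathbb{D}^*)$ and $P_X$ is open; hence the complex Banach manifold structure on the quotient space transports back through $\hat\beta_X$ to yield the desired complex structure on $T/\!\sim_X$. The main obstacle I anticipate is the descent argument that packages $F$ into the well-defined continuous map $\tilde F$, where one must carefully separate the role of Theorem \ref{EMS} (to control the fibers of $P_X\circ\beta$) from the purely topological role of the openness of $P_X$; once these two ingredients are in place, everything else is formal.
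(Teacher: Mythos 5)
Your well-definedness/injectivity argument and the proof of continuity of $\hat\beta_X$ coincide with the paper's: both reduce to the equivalence $(1)\Leftrightarrow(4)$ of Theorem \ref{EMS} and the universal property of the quotient topology. Where you diverge is in the continuity of the inverse. You construct local continuous sections of $\hat\beta_X$ by lifting through a local holomorphic section $\sigma$ of $\Phi$ (from the split submersion property), using Theorem \ref{EMS} once more to descend $p_X\circ\pi\circ\sigma$ through $P_X|_U$, and invoking openness of $P_X$ to get continuity of the descended map. This is correct: $P_X$ is open, so its restriction to the open set $U$ onto $P_X(U)$ is a quotient map, which justifies the descent; the verification that $\tilde F$ locally inverts $\hat\beta_X$ is another application of $(1)\Rightarrow(4)$. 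The paper takes a shorter, more direct route: it simply observes that $\hat\beta_X$ is itself an open map onto the full quotient $B_{\sharp}^X(\mathbb D^*)\backslash B(\mathbb D^*)$. Indeed, for open $V\subset T/\!\sim_X$, the set $p_X^{-1}(V)$ is open (quotient topology), $\beta(p_X^{-1}(V))$ is open in $B(\mathbb{D}^*)$ since $\beta$ is a homeomorphism onto the open set $\beta(T)$, and then $\hat\beta_X(V)=P_X(\beta(p_X^{-1}(V)))$ is open because $P_X$ is open. An injective continuous open map is a homeomorphism onto its (open) image, which finishes the argument. Your approach trades in the stronger structural input of a local section for an explicit local inverse, which is conceptually closer to how the asymptotic Teichm\"uller case is treated in \cite{GS} and \cite{EMS}; the paper's observation that openness of $\hat\beta_X$ follows formally from openness of $P_X$, $\beta$, and $p_X$ is slicker and avoids invoking the section entirely. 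Both are valid.
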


\begin{proof}
The well-definedness of $\hat \beta_X$ is shown as follows. 
Suppose that $p_X([\mu])=p_X([\nu])$ for $[\mu], [\nu] \in T$, that is,
$[\mu] \sim_X [\nu]$.
By definition, $[\mu] \ast [\nu]^{-1} \in T_{\sharp}^Y$ for $Y=f^\nu(X)$.
%and it follows that
%$\pi^{-1}([\mu] \ast [\nu]^{-1}) \in M_{\sharp}^Y(\mathbb D)$. Moreover, $s([\mu]) \ast s([\nu]^{-1})
%\in M_{\sharp}^Y(\mathbb D)$. Then, 
By the implication $(4) \Rightarrow (1)$ in Theorem \ref{EMS},
we have $\Phi(\mu)-\Phi(\nu) \in B_{\sharp}^X(\mathbb D^*)$. By $\Phi(\mu)=\beta([\mu])$ and 
$\Phi(\nu)=\beta([\nu])$, we see that $P_X \circ \beta([\mu])=P_X \circ \beta([\nu])$, which shows that
$\hat \beta_X$ is well-defined.
The injectivity of $\hat \beta_X$ similarly follows from the implication $(1) \Rightarrow (4)$ in Theorem \ref{EMS}.

Since $p_X$ is the quotient map and $\beta$ is continuous, we see that $\hat \beta_X$ is continuous.
The continuity of $\hat \beta^{-1}_X$ as well as the claim that the image of $\hat \beta$ is open 
in $B_{\sharp}^X(\mathbb D^*) \backslash B(\mathbb D^*)$ follows from the fact that $P_X$ is an open map.
\end{proof}

%For any $n \in \mathbb N$, we consider the union
%$T_{\sharp}^n=\bigcup_{|X|=n} T_{\sharp}^X$ taken over all subset $X \subset \mathbb S$ whose cardinality is $n$.
%
%\medskip
%\noindent
%{\bf Problems.}
%(1) Is the quotient space $T_0 \backslash T_{\sharp}^n$ a submanifold of
%$AT=T_0 \backslash T$? (2) Does $T_0 \backslash T_{\sharp}^n \subset AT$ admit a natural group structure? 

\section{rigidity theorems}

Let $Q$ and $Q'$ be groups consisting of sense-preserving self-homeomorphisms of $\mathbb S$ with 
$\mbox{\rm M\"ob}(\mathbb{S}) \subset Q \subsetneqq Q'$ in general. 
Rigidity of $Q$ with respect to $Q'$ is a property that
for a subgroup $G$ of $\mbox{\rm M\"ob}(\mathbb{S})$ and for $f \in Q'$,
the condition $fGf^{-1} \subset Q$ implies that $f \in Q$. Namely,
the representation of $G$ in $Q$ given by the conjugation of an element of $Q'$ is
only an inner automorphism of $Q$. 

We consider this rigidity for a subgroup $Q$ of $\rm QS$ with certain regularity.
In concrete, we set $Q={\rm Diff}^{r}_+(\mathbb{S})$, the group consisting of all sense-preserving
$C^r$-diffeomorphisms $f$ of $\mathbb S$ onto itself for any real number $r>1$. When $r \notin \mathbb N$,
this means that $f$ is in $C^{[r]}$ and its $[r]$-th derivative is H\"older continuous of exponent $r-[r]$.
We also need to restrict a subgroup $G$ of $\mbox{\rm M\"ob}(\mathbb{S}) \cong \mbox{\rm M\"ob}(\mathbb{D})$ to be non-elementary,
which means by definition that there is no finite subset $X \subset \overline{\mathbb D}$ that satisfies $g(X)=X$ for
every $g \in G$.

The following results were proved in \cite{Ma16}. Statement (2) has been slightly generalized by
applying a result in \cite{Ma19}. 

\begin{proposition}
Let $G$ be a non-elementary subgroup of $\mbox{\rm M\"ob}(\mathbb{S})$.
$(1)$ If $fGf^{-1} \in \mbox{\rm M\"ob}(\mathbb{S})$ for $f \in {\rm Sym}$, then $f \in \mbox{\rm M\"ob}(\mathbb{S})$.
$(2)$ If $fGf^{-1} \in {\rm Diff}^{r}_+(\mathbb{S})$ $(r>1)$ for $f \in {\rm Sym}$, then $f \in {\rm Diff}^{r}_+(\mathbb{S})$.
\end{proposition}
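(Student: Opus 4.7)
The plan is to work with the barycentric extension $F = E(f)\colon \mathbb{D} \to \mathbb{D}$ of $f \in {\rm Sym}$. By the Earle--Markovic--Saric theorem recalled in Section~2, $F$ is asymptotically conformal, so its Beltrami coefficient $\mu_F$ lies in $M_0(\mathbb{D})$. The common strategy for both parts is to transfer the conjugation hypothesis on $\mathbb{S}$ to a structural constraint on $F$ (or on $f$ near fixed points of hyperbolic elements of $G$), and then to extract rigidity from the abundance of dynamics guaranteed by non-elementariness.

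For part $(1)$, pick any $g \in G$ and set $\tilde g := f g f^{-1} \in \mbox{\rm M\"ob}(\mathbb{S})$; both $g$ and $\tilde g$ extend to hyperbolic isometries of $\mathbb{D}$. Since $\tilde g \circ f\circ g^{-1}=f$ on $\mathbb{S}$, the conformal naturality $E(\tilde g\circ f\circ g^{-1})=\tilde g\circ E(f)\circ g^{-1}$ of the barycentric extension yields the identity $F = \tilde g\circ F\circ g^{-1}$ on $\mathbb{D}$. Reading this in terms of Beltrami coefficients (Möbius pre- and post-composition preserve $|\mu|$) gives
\[
|\mu_F(g(z))| = |\mu_F(z)| \quad \text{for a.e. } z\in\mathbb{D}
\]
and every $g$ in a countable non-elementary subgroup $G_0\subset G$ chosen so the equality holds simultaneously. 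Since $G_0$-orbits accumulate on $\mathbb{S}$ by non-elementariness, and $\mu_F$ vanishes at the boundary, this forces $\mu_F\equiv 0$, so $F\in\mbox{\rm M\"ob}(\mathbb{D})$ and $f\in\mbox{\rm M\"ob}(\mathbb{S})$.

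For part $(2)$, $\tilde g\in {\rm Diff}^r_+(\mathbb{S})$ is no longer Möbius, so the clean Beltrami invariance of part $(1)$ is unavailable. The plan is to exploit hyperbolic dynamics: fixed points of hyperbolic elements of $G$ are dense in the limit set $\Lambda(G)$, and at such a fixed point $\xi$ of a hyperbolic $g\in G$, the conjugate $\tilde g$ is a $C^r$ diffeomorphism fixing $f(\xi)$. The semiconjugacy $fg=\tilde g f$ yields the iterative formula
\[
f(x)=\tilde g^{-n}\bigl(f(g^n(x))\bigr)
\]
for $x$ in a neighborhood of $\xi$. Because $g^n$ contracts a hyperbolic neighborhood of $\xi$ to $\xi$ and $\tilde g^{-n}$ admits Sternberg-type $C^r$-linearization near $f(\xi)$, one expresses $f$ near $\xi$ as the uniform $C^r$-limit of smooth maps, obtaining local $C^r$-regularity of $f$ with bounds uniform in the hyperbolic element used. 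Density of hyperbolic fixed points in $\Lambda(G)$, combined with the quasisymmetric a priori bound on $f$ and the refinement of \cite{Ma19}, then patches the local data across $\Lambda(G)$ and the wandering components of $\mathbb{S}\setminus\Lambda(G)$ into the global conclusion $f\in{\rm Diff}^r_+(\mathbb{S})$.

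The principal obstacle is in part $(2)$: establishing $C^r$-convergence of the iterative pullback uniformly on a neighborhood of each hyperbolic fixed point, keeping careful track of Hölder exponents when $r\notin\mathbb{N}$, and assembling the local $C^r$-data on a dense countable subset of $\Lambda(G)$ into global $C^r$-regularity on all of $\mathbb{S}$. Part $(1)$ is comparatively soft once the barycentric extension and asymptotic conformality are in hand.
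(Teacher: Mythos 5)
Your route for part (1) is sound and genuinely different from the paper's. The paper (following \cite{Ma16} and its own generalizations to ${\rm QS}_{\sharp}^X$) works on the Bers side: it sets $\varphi=\beta([f])\in B_0(\mathbb{D}^*)$, observes that $fGf^{-1}\subset\mbox{\rm M\"ob}(\mathbb{S})$ is \emph{exactly} the pullback-invariance $g^*\varphi=\varphi$ for all $g\in G$, and then lets a hyperbolic $g_0$ push a fixed $z\in\mathbb{D}^*$ to the boundary where the holomorphic differential must vanish, giving $\varphi\equiv 0$ pointwise. You instead work on the Beltrami side with $F=E(f)$, using conformal naturality to get $|\mu_F\circ g|=|\mu_F|$ a.e. Both work, but the Schwarzian approach has two structural advantages: $\varphi$ is holomorphic, hence continuous and defined pointwise, so the dynamical limiting argument needs no a.e.\ bookkeeping and no countable-subgroup reduction; and it produces a genuine \emph{equality} $g^*\varphi=\varphi$ rather than only an equality of absolute values, which is precisely the structure part (2) exploits.

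For part (2) there is a genuine gap. The paper stays on the Schwarzian side: by \cite[Prop.\ 4.3]{Ma16}, $fGf^{-1}\subset{\rm Diff}_+^r(\mathbb{S})$ forces $\psi=g_0^*\varphi-\varphi$ into the weighted space $B_0^\alpha(\mathbb{D}^*)$, and an Abel-summation argument (using $\varphi\in B_0(\mathbb{D}^*)$ to kill the boundary terms $(g_0^*)^{n}\varphi$) expresses $\varphi$ as a geometric-type series in $(g_0^*)^i\psi$, which then lands $\varphi$ itself in $B_0^\alpha(\mathbb{D}^*)$; the converse result \cite[Thm.\ 7.4]{Ma19} closes the loop. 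Your Sternberg-linearization strategy is a different idea entirely, and the obstacle you flag at the end is exactly where it breaks down as written: (i) hyperbolic fixed points are dense only in $\Lambda(G)$, and the discontinuity domain $\mathbb{S}\setminus\Lambda(G)$ gets no information from your local analysis, so there is no patching to do there at all; (ii) the linearization constants depend on the multiplier and on neighbourhood sizes, and $G$ contains hyperbolic elements of arbitrarily large translation length, so there is no obvious uniformity in the local $C^r$ bounds; (iii) $C^r$-linearization for non-integer $r$ with the right H\"older exponent is itself delicate. None of this is addressed. The paper's approach sidesteps all three points by working with a single global holomorphic object, so your proposal should either fill the patching/uniformity gap explicitly or switch to the Bers-embedding argument for part (2).
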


We will generalize $Q'=\rm Sym$ to ${\rm QS}_{\sharp}^X$ and prove the following two theorems corresponding to
(1) and (2) in the above proposition.

\begin{theorem}
Let $G$ be a non-elementary subgroup of $\mbox{\rm M\"ob}(\mathbb{S})$.
If $fGf^{-1} \in \mbox{\rm M\"ob}(\mathbb{S})$ for $f \in {\rm QS}_{\sharp}^X$ with a finite subset $X \subset \mathbb S$,
then $f \in \mbox{\rm M\"ob}(\mathbb{S})$.
\end{theorem}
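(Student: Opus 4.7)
The plan is to reduce the claim to the symmetric case already settled in the Matsuzaki proposition recalled just above. Setting $\tilde g := fgf^{-1} \in \mbox{\rm M\"ob}(\mathbb S)$, the hypothesis becomes the functional equation $f\circ g = \tilde g \circ f$ on $\mathbb S$ for every $g \in G$. By Theorem \ref{closed} (and the remark following it), the assumption $f \in {\rm QS}_\sharp^X$ means that $f$ has local dilatation $1$ at every point of $\mathbb S \setminus X$, so the goal is to promote this to each $\xi \in X$ and thereby conclude $f \in {\rm Sym}$.

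Fix $\xi \in X$. Since $G$ is non-elementary, its orbit $G\cdot\xi$ on $\mathbb S$ is infinite, and as $X$ is finite there must exist $g \in G$ with $\eta := g(\xi) \in \mathbb S \setminus X$. Choose an open arc $U \subset \mathbb S \setminus X$ containing $\eta$; Proposition \ref{sym} applied to a closed subarc of $U$ yields a quasiconformal extension $\tilde f$ of $f$ to an open set $V \subset \overline{\mathbb D}$ with $\eta \in V$ whose complex dilatation is asymptotically conformal at $\eta$. Extending $g$ and $\tilde g^{-1}$ to M\"obius transformations of $\mathbb D$, the composition $\tilde g^{-1}\circ \tilde f \circ g$ is a quasiconformal extension of $f$ on the neighborhood $g^{-1}(V)$ of $\xi$, and by the chain rule its complex dilatation at $z$ coincides in modulus with that of $\tilde f$ at $g(z)$. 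Hence this dilatation tends to $0$ as $z \to \xi$, which shows that $f$ has local dilatation $1$ at $\xi$ as well. Since $\xi \in X$ was arbitrary, $f$ has local dilatation $1$ at every point of $\mathbb S$, so by Proposition \ref{sym} (equivalently, by Theorem \ref{closed} with $X = \varnothing$) we conclude $f \in {\rm Sym}$. Part (1) of the Matsuzaki proposition then gives $f \in \mbox{\rm M\"ob}(\mathbb S)$.

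The main obstacle is really only the careful transport of the local symmetry from $\eta$ back to $\xi$ via the conjugacy relation: one must verify that pre- and post-composition by M\"obius transformations (which are conformal) indeed preserves the pointwise vanishing of the complex dilatation, and that $g^{-1}(V)$ genuinely meets $\overline{\mathbb D}$ in a neighborhood of $\xi$. Both are immediate once written out, but this is the step where the non-elementary hypothesis on $G$ enters essentially; without it, the orbit $G\cdot\xi$ could be contained in $X$ and no useful $g$ would be available.
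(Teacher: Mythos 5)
Your proof is correct, and it takes a genuinely different route from the paper. The paper works entirely on the Schwarzian side: it sets $\varphi=\beta([f])\in B_\sharp^X(\mathbb D^*)$, observes that the conjugation hypothesis translates into the $G$-invariance $g^*\varphi=\varphi$, picks a hyperbolic $g_0\in G$ whose attracting fixed point avoids $X$, and lets the orbit $g_0^n(z)$ drag the invariant quantity $\rho_{\mathbb D^*}^{-2}|\varphi|$ to the boundary off $X$, forcing $\varphi\equiv 0$ and hence $[f]=[\mathrm{id}]$. You instead stay on the dynamical/geometric side of the dictionary: you use non-elementarity only to ensure that every $\xi\in X$ has an orbit point $g(\xi)\notin X$, then transport the asymptotic conformality of an extension of $f$ near $g(\xi)$ back to $\xi$ via $\tilde g^{-1}\circ\tilde f\circ g$ (pre- and post-composition by M\"obius maps preserves $|\mu|$), obtaining local dilatation $1$ at every point of $\mathbb S$, i.e.\ $f\in{\rm Sym}$, and then invoke the already-quoted symmetric rigidity result. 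Your argument reduces the theorem to the known symmetric case, so it is shorter conceptually but not self-contained; the paper's argument is self-contained and bypasses the symmetric case entirely, at the cost of using the full Bers-embedding machinery ($\beta$, $B_\sharp^X$, the equivariance $g^*\varphi$). The two uses of non-elementarity are also slightly different in strength: the paper needs a hyperbolic element with attracting fixed point off $X$, while you only need the (automatically infinite) orbit of each $\xi\in X$ to leave the finite set $X$; both are available since non-elementary means no finite invariant set in $\overline{\mathbb D}$.

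One small point worth being explicit about when you write this up: Proposition \ref{sym}(4) gives an extension $\tilde f$ of $f|_I$ that is asymptotically conformal towards the whole arc $I$, not just at the single point $\eta$, and after conjugating you get an extension of $f|_{g^{-1}(I)}$ whose dilatation is uniformly small on a neighborhood of $g^{-1}(I)$ in $\overline{\mathbb D}$; this is exactly what condition (4) of Proposition \ref{sym} requires in order to deduce (2) at $\xi$, so the reduction to the global statement (Theorem \ref{closed} with $X=\varnothing$, i.e.\ ${\rm QS}_\sharp^\varnothing={\rm Sym}$) is legitimate.
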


\begin{proof}
The equivalence class of $f \in {\rm QS}_{\sharp}^X$ defines the element 
$[f]$ in $T_{\sharp}^X=\mbox{\rm M\"ob}(\mathbb{S}) \backslash {\rm QS}_{\sharp}^X$. We set
$\varphi=\beta([f]) \in \Phi(M_{\sharp}^X(\mathbb D))$, which belongs to $B_{\sharp}^X({\mathbb D}^*)$ by Theorem \ref{Bers}.
The condition $f G f^{-1} \subset \mbox{\rm M\"ob}(\mathbb{S})$ is equivalent to that
$$
g^*\varphi(z)=\varphi(g(z))g'(z)^2=\varphi(z) \quad (z \in \mathbb D^*)
$$
for every $g \in G \subset \mbox{\rm M\"ob}({\mathbb S}) \cong \mbox{\rm M\"ob}({\mathbb D}^*)$,
the group of all M\"obius transformations of $\mathbb{D}^*$.
Then,
$$
\rho_{{\mathbb D}^*}^{-2}(z)|\varphi(z)|=\rho_{{\mathbb D}^*}^{-2}(z)|g^*\varphi(z)|
=\rho_{{\mathbb D}^*}^{-2}(g z)|\varphi(g z)|.
$$
Since $G$ is non-elementary, we can choose a hyperbolic element $g_0 \in G$ whose attracting fixed point 
is not in $X$. 
By $\varphi \in B_{\sharp}^X({\mathbb D}^*)$, we have $\varphi(g_0^n(z)) \to 0$ $(n \to \infty)$ for all $z \in {\mathbb D}^*$.
Hence, $\varphi(z) \equiv 0$. This means that
$[f]=[{\rm id}]$, and equivalently, $f \in \mbox{\rm M\"ob}(\mathbb{S})$.
\end{proof}

\begin{theorem}
Let $G$ be a non-elementary subgroup of $\mbox{\rm M\"ob}(\mathbb{S})$.
If $fGf^{-1} \in {\rm Diff}^{r}_+(\mathbb{S})$ $(r>1)$ for $f \in {\rm QS}_{\sharp}^X$ with a finite subset $X \subset \mathbb S$,
then $f \in {\rm Diff}^{r}_+(\mathbb{S})$.
\end{theorem}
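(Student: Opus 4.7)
The plan is to follow the strategy of the preceding theorem and to adapt the argument of \cite{Ma16, Ma19} to the piecewise symmetric setting by combining a localized dynamical rigidity step with the group action of $G$. The proof proceeds in two main stages: first, establish $C^r$ regularity of $f$ near a single attracting fixed point of some hyperbolic element of $G$ that lies in $\mathbb{S}\setminus X$; second, propagate this regularity to all of $\mathbb{S}$ using the conjugation equation.

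For the first stage, since $G$ is non-elementary and $X$ is finite, I choose a hyperbolic $g \in G$ whose attracting fixed point $p$ lies in $\mathbb{S}\setminus X$. By Theorem \ref{closed}, $f$ is symmetric on a closed interval of $\mathbb{S}\setminus X$ containing $p$, so the local dilatation of $f$ at $p$ equals $1$. Setting $\tilde g = f g f^{-1} \in {\rm Diff}^r_+(\mathbb{S})$, I obtain the conjugacy equation $f\circ g = \tilde g \circ f$, in which $g$ is real-analytic with a hyperbolic contraction at $p$ and $\tilde g$ is $C^r$ with a corresponding contraction at $f(p)$. The local version of the linearization/rigidity argument of \cite{Ma19}, which uses only the $C^r$ conjugacy equation, the hyperbolic contraction at $p$, and the local dilatation of $f$ being $1$ at $p$, then yields that $f$ is $C^r$ in a neighborhood of $p$.

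For the second stage, let $A \subset \mathbb{S}$ denote the open set of points at which $f$ is locally $C^r$, so that $p \in A$ by the first stage. The identity $f = \tilde g \circ f \circ g^{-1}$ for every $g \in G$, together with the $C^r$ regularity of both $g$ and $\tilde g$, shows that $g(A) \subset A$; hence $A$ is $G$-invariant. Since $p$ lies in the limit set $\Lambda(G)$, the orbit $Gp$ is dense in $\Lambda(G)$, and openness of $A$ then gives $A \supset \Lambda(G)$. The complement $C = \mathbb{S}\setminus A$ is a closed $G$-invariant subset disjoint from $\Lambda(G)$; any $q \in C$ would satisfy $\overline{Gq} \supset \Lambda(G)$ (a standard feature of orbit closures for non-elementary M\"obius group actions, valid whether $q$ lies in the limit set or in the discontinuity set), contradicting $C \cap \Lambda(G) = \emptyset$. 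Therefore $C = \emptyset$, $A = \mathbb{S}$, and $f \in {\rm Diff}^r_+(\mathbb{S})$; note that points of $X$ are handled automatically by this propagation.

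The main obstacle is the first stage: the rigidity result of \cite{Ma19} is stated for globally symmetric maps, whereas here I only know that $f$ is symmetric on an interval of $\mathbb{S}\setminus X$ containing $p$. Verifying that its zoom-in argument at the attracting fixed point depends only on the local dilatation at $p$, and not on the symmetric property elsewhere, is the essential technical point. Once this localization is confirmed, the group-theoretic propagation in the second stage follows routinely from the minimality of the $G$-action on $\Lambda(G)$ and the non-elementarity of $G$.
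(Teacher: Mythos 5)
Your two-stage plan (local regularity at a hyperbolic fixed point $p\notin X$, then $G$-propagation via minimality of the action on $\Lambda(G)$) is a genuinely different route from the paper's, but Stage 1, which you yourself flag as the ``essential technical point,'' is a real gap, not a routine verification. The rigidity input from \cite{Ma16, Ma19} that the paper invokes is not a local linearization lemma at the attracting fixed point: it is a statement about the Schwarzian derivative $\varphi=\beta([f])$ and the cocycle $\psi=g_0^*\varphi-\varphi$, concluding that $\psi\in B_0^\alpha(\mathbb{D}^*)$ globally once $fGf^{-1}\subset{\rm Diff}^r_+(\mathbb{S})$. Turning this into a statement that ``$f$ is $C^r$ on a neighborhood of $p$'' requires a localized version of that machinery which does not currently exist, and it is far from clear that a single-fixed-point zoom suffices, since the conclusion $\varphi\in B_0^\alpha(\mathbb{D}^*)$ in the paper is obtained by summing a telescoping series over the full $\langle g_0\rangle$-orbit, using decay at \emph{both} fixed points of $g_0$ (both must avoid $X$), not just the attracting one.

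The paper avoids your Stage 1 entirely and never needs your Stage 2. It works in the Bers embedding: with $\varphi\in B_\sharp^X(\mathbb{D}^*)$ and $g_0$ hyperbolic with both fixed points off $X$, it writes $\varphi=-\sum_{i\geqslant 0}(g_0^*)^i\psi=\sum_{i\geqslant 1}(g_0^*)^{-i}\psi$ (the two Abel sums, converging precisely because $\varphi$ decays along both ends of the $g_0$-axis thanks to $\varphi\in B_\sharp^X$), deduces $\varphi\in B_0^\alpha(\mathbb{D}^*)$ from \cite[Lemma 4.5]{Ma16}, and then reads off global $C^r$ regularity of $f$ from \cite[Theorem 7.4]{Ma19}. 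This gives regularity on all of $\mathbb{S}$ at once, with no need to propagate along orbits or to argue separately at points of $X$ or in the discontinuity set. Your Stage 2 propagation (open $G$-invariant set $A$ containing $\Lambda(G)$, orbit closures accumulating on $\Lambda(G)$) is a correct and standard argument, so if you could supply a genuine local regularity lemma at $p$ it would close the proof; but as written, the proposal defers the actual content of the theorem to an unproved localization.
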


\begin{proof}
As before, we consider 
$\varphi=\beta([f])$ in $B_{\sharp}^X({\mathbb D}^*)$.
We also choose a hyperbolic element $g_0 \in G$ whose fixed points 
are not in $X$. By \cite[Proposition 4.3]{Ma16},
the condition $f G f^{-1} \subset {\rm Diff}^{r}_+(\mathbb{S})$ implies that
$\psi=g_0^*\varphi-\varphi$ belongs to the Banach space
$$
B_0^\alpha(\mathbb D^*)=\{\varphi \in B({\mathbb D}^*) \mid \sup_{z \in {\mathbb D}^*} \rho_{\mathbb D^*}^{\alpha-2}(z)|\varphi(z)|<\infty\}
$$
for some $\alpha \in (0,1)$. 

As in \cite[Proposition 4.4]{Ma16}, we can show that 
\begin{equation}\label{abel}
\varphi(z)=-\sum_{i=0}^{\infty} (g_0^*)^i \psi(z)=\sum_{i=1}^{\infty} (g_0^*)^{-i}\psi(z)
\end{equation} 
for each $z \in \mathbb D^*$ as follows.
For each $i \in \mathbb Z$, it holds $(g_0^*)^i \psi=(g_0^*)^{i+1} \varphi-(g_0^*)^{i}\varphi$.
Summing up this from $i=0$ to $n \geqslant 0$, we have
$$
\sum_{i=0}^{n} (g_0^*)^i \psi=(g_0^*)^{n+1} \varphi-\varphi.
$$
Here, $\lim_{n \to +\infty}(g_0^*)^{n+1} \varphi(z)=0$. Indeed, for each $z \in {\mathbb D}^*$,
$$
\rho_{\mathbb D^*}^{-2}(z)|(g_0^*)^{n+1}\varphi(z)|
=\rho_{\mathbb D^*}^{-2}(g_0^{n+1}(z))|\varphi(g_0^{n+1}(z))|,
$$
and the right side term converges to $0$ as $n \to \infty$ because $\varphi \in B_{\sharp}^X(\mathbb D^*)$.
Thus, $\varphi(z)=-\sum_{i=0}^{\infty} (g_0^*)^i \psi(z)$.
If we sum up the above equation from $i=-1$ to $-n \leqslant -1$ and take the limit as $n \to \infty$, 
then we can obtain the second equation in
the same reason.

From Formulae (\ref{abel}), we can prove that $\varphi$ itself belongs to $B_0^\alpha(\mathbb D^*)$ (\cite[Lemma 4.5]{Ma16}).
This in particular implies that $f$ is a diffeomorphism of $\mathbb S$ onto itself.
Then, by \cite[Theorem 7.4]{Ma19}, we can conclude that $f \in {\rm Diff}^{r}_+(\mathbb{S})$. 
\end{proof}

\section{Exhaustion by countable sequences}

In this section, we investigate the relationship between the exhaustion of a subset $X \subset \mathbb S$ by
an increasing sequence $X_n \nearrow X$ and the inclusion $\bigcup_{n=1}^\infty T_{\sharp}^{X_n} \subset T_{\sharp}^{X}$
of the corresponding Teichm\"uller spaces. We start with a basic lemma.

\begin{lemma}\label{strict}
If $X \subsetneqq X' \subset \mathbb S$, then the inclusion $T_{\sharp}^{X}\subset T_{\sharp}^{X'}$ is strict.
\end{lemma}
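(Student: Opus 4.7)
The inclusion $T_{\sharp}^{X}\subset T_{\sharp}^{X'}$ is transparent from the definitions, since any compact $K\subset\mathbb{D}\cup X$ is automatically a compact subset of $\mathbb{D}\cup X'$, and hence $M_{\sharp}^{X}(\mathbb{D})\subset M_{\sharp}^{X'}(\mathbb{D})$. To establish strictness, I would fix a point $\xi\in X'\setminus X$ and aim to produce a quasisymmetric $h\in{\rm QS}_{\sharp}^{X'}$ whose Teichm\"uller class $[h]$ lies outside $T_{\sharp}^{X}$.

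The plan is to construct $h$ as a ``kink'' homeomorphism at $\xi$: a sense-preserving self-homeomorphism of $\mathbb{S}$ that is a $C^\infty$-diffeomorphism with positive derivative on $\mathbb{S}\setminus\{\xi\}$ but whose one-sided derivatives at $\xi$ differ (say, $1$ from one side and $2$ from the other). Transporting a standard real-line model (piecewise linear on a short interval around $0$, equal to the identity outside, smoothly interpolated) through a chart sending $\xi$ to $0$ gives such an $h$. Any $C^1$-diffeomorphism with uniformly positive derivative on a closed interval $I$ is symmetric on $I$, so $h|_I$ is symmetric for every closed $I\subset\mathbb{S}\setminus\{\xi\}$. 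Theorem \ref{closed}, applied with the finite singleton $\{\xi\}\subset X'$, then yields $h\in{\rm QS}_{\sharp}^{\{\xi\}}\subset{\rm QS}_{\sharp}^{X'}$, hence $[h]\in T_{\sharp}^{X'}$.

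It remains to rule out $[h]\in T_{\sharp}^{X}$. I would argue by contradiction: if $[h]$ were in $T_{\sharp}^{X}$, then by Corollary \ref{DE} the complex dilatation $\mu$ of the barycentric extension $E(h)$ would lie in $M_{\sharp}^{X}(\mathbb{D})$. Since $\xi\notin X$, any compact $K\subset\mathbb{D}\cup X$ is separated from $\xi$ in $\overline{\mathbb{D}}$; hence for each $\varepsilon>0$ one can find an open neighborhood $U$ of $\xi$ in $\overline{\mathbb{D}}$ with $U\cap K=\varnothing$ and consequently $\Vert\mu|_{U\cap\mathbb{D}}\Vert_{\infty}<\varepsilon$. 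The implication $(4)\Rightarrow(1)$ of Proposition \ref{sym} would then force $h$ to be symmetric on some closed interval containing $\xi$, contradicting the kink at $\xi$. The only mildly technical point I anticipate is the explicit kink construction together with the verification that a $C^1$-diffeomorphism with positive derivative on a closed interval is symmetric there; both are standard but I would spell them out carefully in the written proof.
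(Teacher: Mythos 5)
Your plan is a genuinely different route from the paper's. The paper works entirely on the Bers side: it reduces to showing $B_\sharp^X(\mathbb{D}^*)\subsetneqq B_\sharp^{X'}(\mathbb{D}^*)$ (using Corollary \ref{image} and the openness of $\beta(T)$), picks a parabolic $\gamma$ fixing some $\xi\in X'\setminus X$, takes a nonzero $\langle\gamma\rangle$-automorphic element $\varphi$ of $B_\sharp^\xi(\mathbb{D}^*)$, and notes that the constancy of $\rho_{\mathbb{D}^*}^{-2}|\varphi|$ along a $\gamma$-orbit accumulating at $\xi\notin X$ obstructs membership in $B_\sharp^X(\mathbb{D}^*)$. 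Producing a concrete kink homeomorphism on the boundary is an attractive and more geometric alternative, and your first half --- that $h\in{\rm QS}_\sharp^{\{\xi\}}\subset{\rm QS}_\sharp^{X'}$ via Theorem \ref{closed} applied to the finite set $\{\xi\}$ --- is correct.

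The last step, however, has a real gap. Condition $(4)$ of Proposition \ref{sym} is an \emph{interval} condition: one needs a \emph{fixed} closed arc $I$ with $\xi$ in its interior and a \emph{fixed} open $V\supset I$ such that for every $\varepsilon>0$ there is $U$ with $I\subset U\subset V$ and $|\mu|<\varepsilon$ a.e.\ on $U\cap\mathbb{D}$. What $\mu\in M_\sharp^X(\mathbb{D})$ with $\xi\notin X$ actually gives you is weaker: for each $\varepsilon$, a neighborhood $U_\varepsilon$ of the single point $\xi$ with $\Vert\mu|_{U_\varepsilon\cap\mathbb{D}}\Vert_\infty<\varepsilon$, and these $U_\varepsilon$ may shrink to $\{\xi\}$. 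They indeed must shrink whenever $X$ accumulates at $\xi$, which the hypothesis $X\subsetneqq X'\subset\mathbb{S}$ allows (e.g.\ $X=\{e^{i/n}\}_{n\geqslant 1}$, $\xi=1$). In that case no fixed arc $I$ around $\xi$ lies in all of $U_\varepsilon\cap\mathbb{S}$, so you cannot apply $(4)\Rightarrow(1)$ for any arc containing $\xi$ in its interior.

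The repair is to route through condition $(2)$ instead. The shrinking $U_\varepsilon$'s yield exactly the pointwise statement $D_h(\xi)=1$, because the local dilatation at a single point is by definition an infimum over extensions to arbitrarily small neighborhoods of that point. Your smoothness of $h$ away from $\xi$ gives $D_h(\eta)=1$ for every $\eta\neq\xi$. Hence for any closed arc $I$ containing $\xi$ in its interior, $D_h(\eta)=1$ at every $\eta\in I$, and $(2)\Rightarrow(1)$ of Proposition \ref{sym} forces $h$ symmetric on $I$, contradicting $|h([\xi,\xi+t])|/|h([\xi-t,\xi])|\to 2\neq 1$ as $t\to 0$. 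With that substitution your construction does yield a point of $T_\sharp^{X'}\setminus T_\sharp^X$.
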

\begin{proof}
The Bers embeddings satisfy $\beta(T_{\sharp}^{X})=\beta(T) \cap B_{\sharp}^{X}(\mathbb{D}^*)$ and
$\beta(T_{\sharp}^{X'})=\beta(T) \cap B_{\sharp}^{X'}(\mathbb{D}^*)$, where
$\beta(T)$ is an open subset of $B(\mathbb{D}^*)$.
Hence, we have only to show that $B_{\sharp}^{X}(\mathbb{D}^*) \subset B_{\sharp}^{X'}(\mathbb{D}^*)$ is a strict inclusion. 
This can be done by showing some element of $B_{\sharp}^{X'}(\mathbb{D}^*)$ does not belong to $B_{\sharp}^{X}(\mathbb{D}^*)$. 

We choose some $\xi \in X' \setminus X$.
For a parabolic transformation $\gamma \in \mbox{\rm M\"ob}(\mathbb{S}) \cong \mbox{\rm M\"ob}(\mathbb{D}^*)$ with the fixed point $\xi$,
we consider the $\langle \gamma \rangle$-invariant subspace
$$
B_{\sharp}^{\xi}(\mathbb{D}^*, \langle \gamma \rangle)= \{\psi\in B_{\sharp}^{\xi}(\mathbb{D}^*) \mid \gamma^*\psi= \psi\}
\subset B_{\sharp}^{X'}(\mathbb{D}^*),  
$$
which contains a non-zero element. 
For any non-zero element $\varphi \in B_{\sharp}^{\xi}(\mathbb{D}^*, \langle \gamma \rangle)$, 
there exists some $z_0 \in \mathbb{D}^*$ such that $\rho_{\mathbb{D}^*}^{-2}(z_0)|\varphi(z_0)| \neq 0$. 
Moreover, $\rho_{\mathbb{D}^*}^{-2}(z)|\varphi(z)|$ takes the same value on
the orbit $\{\gamma^n(z_0)\}_{n \in \mathbb Z}$ of $z_0$, which accumulates to $\xi \notin X$. 
This implies that $\varphi$ does not belong to $B_{\sharp}^{X}(\mathbb{D}^*)$.
\end{proof}

For any strictly increasing infinite sequence of subsets 
$X_1 \subsetneqq X_2 \subsetneqq \cdots \subsetneqq X_n \subsetneqq \cdots$ of $\mathbb S$,
we consider the sequence of the corresponding Teichm\"uller spaces 
$$
T_{\sharp}^{X_1} \subset T_{\sharp}^{X_2} \subset \cdots \subset T_{\sharp}^{X_n} \subset \cdots.
$$ 
By Lemma \ref{strict}, these inclusions are all strict.
We compare $T_{\sharp}^{X}$ with $\bigcup_{n=1}^{\infty}T_{\sharp}^{X_n}$ where $X=\bigcup_{n=1}^\infty X_n \subset \mathbb S$.
%We remark that $\bigcup_{n=1}^{\infty}T_{\sharp}^{X_n}$ does not depend on the order of the sequence $X = \{\xi_1, \xi_2, \dots \}$.

\begin{theorem}
Under the above circumstances,
%the following are satisfied:
%\begin{enumerate}
%\item
the increasing union $\bigcup_{n=1}^{\infty}T_{\sharp}^{X_n}$ is not closed in $T_{\sharp}^X$, and hence,
$\bigcup_{n=1}^{\infty}T_{\sharp}^{X_n}$ is strictly contained in $T_{\sharp}^X$.
%\item
%The Teich\-m\"ul\-ler space $T_{\sharp}^X$ is strictly contained in $T$ even if $X$ is dense in $\mathbb S$.
%\end{enumerate}
\end{theorem}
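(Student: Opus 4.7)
The Bers embedding $\beta$ is a homeomorphism onto its image by Corollary~\ref{complexstructure}, and by Corollary~\ref{image} we have $\beta(T_{\sharp}^{X_n}) = \beta(T) \cap B_{\sharp}^{X_n}(\mathbb{D}^*)$ for every $n$. Therefore, to produce a limit point of $\bigcup_n T_{\sharp}^{X_n}$ inside $T_{\sharp}^X$ that lies outside the union, my plan is to produce an element $\psi \in \beta(T) \cap B_{\sharp}^X(\mathbb{D}^*)$ which lies in the $B$-norm closure of $\bigcup_n \beta(T_{\sharp}^{X_n})$ but in no $B_{\sharp}^{X_n}(\mathbb{D}^*)$, and then pull back by $\beta^{-1}$.

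To find such a $\psi$, I would run a Baire category argument on the closed subspaces $B_{\sharp}^{X_n}(\mathbb{D}^*) \subset B(\mathbb{D}^*)$. Lemma~\ref{strict} (and the non-zero $\gamma$-invariant quadratic differential constructed in its proof) yields strict inclusions $B_{\sharp}^{X_n}(\mathbb{D}^*) \subsetneqq B_{\sharp}^{X_{n+1}}(\mathbb{D}^*)$ for every $n$. Set $B^{\infty} = \bigcup_{n=1}^{\infty} B_{\sharp}^{X_n}(\mathbb{D}^*)$. If $B^{\infty}$ were closed in $B(\mathbb{D}^*)$, it would be a Banach space in its own right, and each $B_{\sharp}^{X_n}(\mathbb{D}^*)$ would be a proper closed linear subspace of $B^{\infty}$, hence nowhere dense in $B^{\infty}$. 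This would make $B^{\infty}$ a countable union of nowhere dense sets, contradicting the Baire category theorem. Hence $B^{\infty}$ is not closed in $B(\mathbb{D}^*)$, and I can pick $\varphi \in \overline{B^{\infty}} \setminus B^{\infty}$ together with a sequence $\varphi_k \to \varphi$ in $B^{\infty}$. The containment $B^{\infty} \subset B_{\sharp}^X(\mathbb{D}^*)$ together with the closedness of $B_{\sharp}^X(\mathbb{D}^*)$ forces $\varphi \in B_{\sharp}^X(\mathbb{D}^*)$.

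To land in the image of the Bers embedding, I would rescale. Since $\beta(T)$ is an open neighborhood of $0$ in $B(\mathbb{D}^*)$, I fix an open ball $V \subset \beta(T)$ around $0$ and choose $\lambda > 0$ small enough that $\psi := \lambda \varphi \in V$. By linearity of each $B_{\sharp}^{X_n}(\mathbb{D}^*)$, the sequence $\lambda \varphi_k$ still lies in $B^{\infty}$ and converges to $\psi$, while $\psi$ itself remains outside $B^{\infty}$; moreover $\lambda \varphi_k \in V \subset \beta(T)$ for all sufficiently large $k$. Pulling back via $\beta^{-1}$ yields points $[\mu_k] \in T_{\sharp}^{X_{n_k}} \subset \bigcup_n T_{\sharp}^{X_n}$ converging in $T_{\sharp}^X$ to $\beta^{-1}(\psi)$, which lies outside the union. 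The only mildly delicate point is ensuring that $\psi$ actually sits in $\beta(T)$, which is handled by the rescaling into $V$; beyond that, everything reduces to Baire plus Lemma~\ref{strict}, so I do not anticipate any serious obstacle.
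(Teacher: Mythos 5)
Your proof is correct and follows essentially the same route as the paper: pass through the Bers embedding using Corollaries~\ref{image} and~\ref{complexstructure}, reduce to the closed linear subspaces $B_{\sharp}^{X_n}(\mathbb{D}^*)\subset B_{\sharp}^{X}(\mathbb{D}^*)$, and invoke the Baire category theorem together with the strict inclusions from Lemma~\ref{strict}. The one place you go beyond the paper is worth noting: the paper stops after concluding that $\bigcup_n B_{\sharp}^{X_n}(\mathbb{D}^*)$ cannot be closed, leaving implicit the transfer back to $T_{\sharp}^X$, whereas you supply the rescaling step ($\psi=\lambda\varphi$ into a ball $V\subset\beta(T)$, using linearity of the subspaces) to exhibit an explicit boundary point inside $\beta(T)\cap B_{\sharp}^X(\mathbb{D}^*)$ and then pull it back by $\beta^{-1}$; this is a genuine and necessary detail, since a priori the point witnessing non-closedness of the union of Banach subspaces need not lie in the open set $\beta(T)$. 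Your phrasing of the Baire step (each $B_{\sharp}^{X_n}$ is a proper closed subspace of $B^\infty$, hence nowhere dense) is also a slightly cleaner packaging of the same contradiction the paper derives via the corollary on unions of nowhere dense sets.
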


\begin{proof}
As the Bers embeddings satisfy $\beta(T_{\sharp}^{X_n})=\beta(T) \cap B_{\sharp}^{X_n}(\mathbb{D}^*)$ (also holds for $T_{\sharp}^{X}$) and
$\beta(T)$ is an open subset of $B(\mathbb{D}^*)$, it suffices to consider the problems for the closed subspaces
$B_{\sharp}^{X_n}(\mathbb{D}^*)$ and $B_{\sharp}^{X}(\mathbb{D}^*)$.
To prove this claim, we use a corollary to the Baire category theorem (see \cite[p.10]{GT}). 
It asserts that  
for a sequence of nowhere dense subsets $\{E_n\}_{n=1}^{\infty}$ of a complete metric space in general, the countable union
$\bigcup_{n=1}^{\infty}E_n$ has empty interior. 

Assume that $\bigcup_{n=1}^{\infty}B_{\sharp}^{X_n}$ is closed. Then, $\bigcup_{n=1}^{\infty}B_{\sharp}^{X_n}$ is a complete metric space with respect to the induced metric by the norm $\Vert\cdot\Vert_{\infty}$. It is obvious that the origin is an interior point of $\bigcup_{n=1}^{\infty}B_{\sharp}^{X_n}$ in the relative topology. Thus, combining this with the fact that $B_{\sharp}^{X_n}$ is closed for every $n$, we can conclude that there is some $n_0$ such that $B_{\sharp}^{X_{n_0}}(\mathbb{D}^*)$ has an interior point in the relative topology of 
$\bigcup_{n=1}^{\infty}B_{\sharp}^{X_n}(\mathbb{D}^*)$. 
This means that there is some open subset $U$ of $B(\mathbb{D}^*)$ such that 
$$
U \cap B_{\sharp}^{X_{n_0}}(\mathbb{D}^*) =U \cap \bigcup_{n=1}^{\infty}B_{\sharp}^{X_n}(\mathbb{D}^*).
$$
This occurs only when $B_{\sharp}^{X_n}(\mathbb{D}^*)$ are same for all $n \geqslant n_0$.
However, this contradicts Lemma \ref{strict} and its proof.
%(2) We take $\varphi \in B(\mathbb{D}^*)$ such that $\rho_{\mathbb{D}^*}^{-2}(z)|\varphi(z)|$ never vanishes at any boundary point of $\mathbb S$. For example, a non-zero $\Gamma$-invariant element for a cocompact Fuchsian group $\Gamma$ satisfies this condition. Then, the distance in the norm $\Vert\cdot\Vert_{B}$ from $\varphi$ to $B_{\sharp}^{X}(\mathbb D^*)$ is the same as $\Vert \varphi \Vert_B$,
%which implies that there exists a neighborhood of $\varphi$ that does not intersect $B_{\sharp}^{X}(\mathbb D^*)$.
%Thus, $B_{\sharp}^{X}(\mathbb D^*) \subsetneqq B(\mathbb{D}^*)$.
\end{proof}

This theorem in particular shows that even in the case of $X=\mathbb S$, 
the increasing union $\bigcup_{n=1}^{\infty}T_{\sharp}^{X_n}$ does not give an exhaustion of the
universal Teichm\"uller space $T=T_{\sharp}^{\mathbb S}$.

We apply the above results to an ordered infinite sequence $X=\{\xi_1, \xi_2, \dots \}$ of distinct points on $\mathbb{S}$. 
We denote the set of the first $n$-th points by $X_n = \{\xi_1, \xi_2, \dots, \xi_n\}$. 
Then, we have a strictly increasing sequence of piecewise symmetric Teich\-m\"ul\-ler spaces $\{T_{\sharp}^{X_n}\}_{n=1}^\infty$.
We are interested in the case where $X$ is dense in $\mathbb S$.
However, the above results imply that there is no exhaustion of $T$ by an increasing sequence of
the piecewise symmetric Teich\-m\"ul\-ler spaces. In fact, we see more:

\begin{proposition}
$(1)$ For any $X=\{\xi_1, \xi_2, \dots \}$, the closure $\overline{\bigcup_{n=1}^{\infty}T_{\sharp}^{X_n}}$ is strictly contained in $T$.
$(2)$ If $X=\{\xi_1, \xi_2, \dots \}$ has no accumulation point in $X$, then 
$\overline{\bigcup_{n=1}^{\infty}T_{\sharp}^{X_n}}$ coincides with $T_{\sharp}^X$.
\end{proposition}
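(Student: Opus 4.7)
The plan is to handle both parts through the Bers embedding and the containment of closed Banach subspaces $B_{\sharp}^{X_n}(\mathbb{D}^*) \subset B_{\sharp}^X(\mathbb{D}^*)$. For $(1)$ I would observe that $T_{\sharp}^{X_n} \subset T_{\sharp}^X$ holds for every $n$, since any witness compact set $K^* \subset \mathbb{D}^* \cup X_n$ is automatically a witness in $\mathbb{D}^* \cup X$, and that $T_{\sharp}^X$ is closed in $T$ via the Bers embedding (as the $\beta$-preimage of the closed set $\beta(T) \cap B_{\sharp}^X(\mathbb{D}^*)$). Hence $\overline{\bigcup_{n=1}^{\infty} T_{\sharp}^{X_n}} \subset T_{\sharp}^X$. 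Since $X$ is countable, $X \subsetneqq \mathbb{S}$; Lemma \ref{strict} combined with the identity $T_{\sharp}^{\mathbb{S}} = T$ (which holds because $\overline{\mathbb{D}}$ is itself compact in $\mathbb{D} \cup \mathbb{S}$, trivializing the defining vanishing condition) yields the strict inclusion $T_{\sharp}^X \subsetneqq T$, completing (1).

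For $(2)$ the inclusion $\overline{\bigcup_n T_{\sharp}^{X_n}} \subset T_{\sharp}^X$ is immediate from (1), so the task is the reverse. My plan is to approximate any representative $\mu \in M_{\sharp}^X(\mathbb{D})$ in the $L^\infty$-norm by Beltrami coefficients in some $M_{\sharp}^{X_N}(\mathbb{D})$ and then transfer this convergence to $T$ via continuity of the Teichm\"uller projection $\pi$. Fix $\varepsilon > 0$ and pick a compact $K_\varepsilon \subset \mathbb{D} \cup X$ with $\Vert \mu|_{\mathbb{D} \setminus K_\varepsilon}\Vert_\infty < \varepsilon$. The crucial geometric observation is that $K_\varepsilon \cap \mathbb{S}$, being closed in $\mathbb{S}$ and contained in $X$, must be finite under the discreteness assumption: if it were infinite, compactness of $\mathbb{S}$ would produce an accumulation point of $K_\varepsilon \cap \mathbb{S}$, which by closedness would lie in $K_\varepsilon \cap \mathbb{S} \subset X$, contradicting the hypothesis that $X$ has no accumulation point in $X$. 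Therefore $K_\varepsilon \cap X \subset X_N$ for some $N$, and $K_\varepsilon$ is compact in $\mathbb{D} \cup X_N$.

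With this setup in hand, I would take the simple cutoff $\mu_\varepsilon := \mu \cdot \chi_{K_\varepsilon}$. This lies in $M(\mathbb{D})$, and since its support sits inside the compact set $K_\varepsilon \subset \mathbb{D} \cup X_N$, the same $K_\varepsilon$ serves trivially as the required witness for every $\varepsilon' > 0$, so $\mu_\varepsilon \in M_{\sharp}^{X_N}(\mathbb{D})$. Moreover $\Vert \mu - \mu_\varepsilon\Vert_\infty = \Vert \mu|_{\mathbb{D}\setminus K_\varepsilon}\Vert_\infty < \varepsilon$, so continuity of $\pi$ gives $[\mu_\varepsilon] \to [\mu]$ in $T$, placing $[\mu]$ in $\overline{\bigcup_n T_{\sharp}^{X_n}}$. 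The main (and essentially only) obstacle is the finiteness of $K_\varepsilon \cap \mathbb{S}$; this is precisely where the discreteness hypothesis is consumed, and without it one cannot reduce the supporting data of a generic element of $M_{\sharp}^X$ to any finite subset $X_N$, so the cutoff $\mu \cdot \chi_{K_\varepsilon}$ would only lie in $M_{\sharp}^X$ and the whole argument would fail.
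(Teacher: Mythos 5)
Your proof is correct and follows essentially the same route as the paper's: for (1), reduce to Lemma~\ref{strict} via the observation $T_{\sharp}^{\mathbb{S}}=T$, and for (2), use the fact that discreteness of $X$ forces every compact $K\subset\mathbb{D}\cup X$ to meet $X$ in only finitely many points, then approximate $\mu$ by the cutoff $\mu\cdot 1_K\in M_{\sharp}^{X_N}(\mathbb{D})$ and project via the continuous map $\pi$. The only difference is cosmetic: you spell out the finiteness argument and the identity $T_{\sharp}^{\mathbb{S}}=T$, which the paper leaves implicit.
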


\begin{proof}
(1) It is clear that $\overline{\bigcup_{n=1}^{\infty}T_{\sharp}^{X_n}} \subset T_{\sharp}^X$. Since $X \subsetneqq \mathbb S$,
we have $T_{\sharp}^X \subsetneqq T$ by Lemma \ref{strict}.
(2) If $X$ has no accumulation point in $X$, then
any compact subset $K \subset {\mathbb D}\cup X$ has at most finitely many points in $X$.
For any $\mu \in M_{\sharp}^X(\mathbb D)$ and for any $\varepsilon>0$, there is a compact subset $K \subset {\mathbb D}\cup X$
such that $\Vert\mu|_{{\mathbb D} \setminus K}\Vert_\infty <\varepsilon$. Let $\mu_K=\mu \cdot 1_K$. Then, $\mu_K$ belongs to
some $M_{\sharp}^{X_n}(\mathbb D)$ and $\Vert \mu-\mu_K \Vert <\varepsilon$. This implies that 
$M_{\sharp}^X(\mathbb D) \subset \overline{\bigcup_{n=1}^{\infty}M_{\sharp}^{X_n}(\mathbb D)}$.
From this, we see that $T_{\sharp}^X = \overline{\bigcup_{n=1}^{\infty}T_{\sharp}^{X_n}}$.
\end{proof}

%\begin{remark}
%{\rm
%From statement (1) above, we may ask a question  whether  
%$\overline{\bigcup_{n=1}^{\infty}T_{\sharp}^{X_n}}= T_{\sharp}^{X}$ or not. 
%This seems depend on the sequence $X = \{\xi_1, \xi_2, \dots \}$. If $X$ has no accumulation point in $X$, then
%the equality holds because any compact subset $K \subset {\mathbb D}\cup X$ has at most finitely many points in $X$ in this case.
%}
%\end{remark}

%The above results in particular implies that there is no exhaustion of $T$ by an increasing sequence of
%the piecewise symmetric Teich\-m\"ul\-ler spaces.
%However, we see that $T$ has a decomposition into ``partially'' symmetric Teich\-m\"ul\-ler spaces.
%
%\begin{proposition}
%Let $X_1$ and $X_2$ be open intervals in $\mathbb S$ such that $X_1 \cup X_2=\mathbb S$.
%Then, $B(\mathbb D^*)=B_{\sharp}^{X_1}(\mathbb D^*)+B_{\sharp}^{X_2}(\mathbb D^*)$.
%\end{proposition}

\end{document}